\definecolor{webgreen}{rgb}{0,.5,0}
\numberwithin{equation}{section}
\def\C{{\mathds{C}}}
\def\N{{\mathds{N}}}
\def\1{{\bf 1}}
\newtheorem{theorem}{Theorem}
\newtheorem{lemma}{Lemma}
\newtheorem{cor}{Corollary}
\newtheorem{prop}{Proposition}
\newtheorem{remark}{Remark}
\begin{document}

\title{{\bf Additive arithmetic functions meet the inclusion-exclusion principle: Asymptotic formulas concerning the GCD and LCM of several integers}}
\author{Olivier Bordell\`es \\  2 All\`ee de la Combe \\ 43000 Aiguilhe, France \\
E-mail: {\tt borde43@wanadoo.fr} \\
and \\  
L\'aszl\'o T\'oth\thanks{The research was financed by NKFIH in Hungary, within the framework of the 2020-4.1.1-TKP2020 3rd thematic programme 
of the University of P\'ecs.} \\ Department of Mathematics \\
University of P\'ecs \\
Ifj\'us\'ag \'utja 6, 7624 P\'ecs, Hungary \\
E-mail: {\tt ltoth@gamma.ttk.pte.hu}}
\date{}
\maketitle

\centerline{Lithuanian Math. J. {\bf 62} (2022), no. 2, 150--169}

\begin{abstract} We obtain asymptotic formulas for the sums $\sum_{n_1,\ldots,n_k\le x} f((n_1,\ldots,n_k))$ and \\ $
\sum_{n_1,\ldots,n_k\le x} f([n_1,\ldots,n_k])$, involving the gcd and lcm of the integers $n_1,\ldots,n_k$, where 
$f$ belongs to certain classes of additive arithmetic functions. In particular, we consider the generalized omega function 
$\Omega_{\ell}(n)= \sum_{p^\nu \mid\mid n} \nu^{\ell}$ investigated by Duncan (1962) and Hassani (2018), 
and the functions $A(n)=\sum_{p^\nu \mid\mid n} \nu p$, $A^*(n)= \sum_{p \mid n} p$, $B(n)=A(n)-A^*(n)$ studied by
Alladi and Erd\H{o}s (1977).  As a key auxiliary result we use an inclusion-exclusion-type identity. 
\end{abstract}

{\sl 2010 Mathematics Subject Classification}: 11A07, 11A25, 11N37

{\sl Key Words and Phrases}: additive function, multiplicative function, generalized omega function, Alladi-Erd\H{o}s functions, greatest common divisor, 
least common multiple, asymptotic formula


\section{Motivation} \label{Sect_Motivation}

Throughout the paper we use the notation: $\N=\{1,2,\ldots\}$, $\N_0=\{0,1,2,\ldots \}$, $(n_1,\ldots,n_k)$ and $[n_1,\ldots,n_k]$ 
denote the greatest common divisor (gcd) and least common multiple (lcm) of $n_1,\ldots, n_k\in \N$, $\1(n)=1$ ($n\in \N$), $\mu$ is the M\"obius function, 
$\tau(n)=\sum_{d\mid n} 1$,  $\omega(n)=\sum_{p\mid n} 1$, $\Omega(n)=\sum_{p^\nu \mid\mid n} 1$, $A(n)=\sum_{p^\nu \mid\mid n} \nu p$, 
$A^*(n)= \sum_{p \mid n} p$, $B(n)=A(n)-A^*(n)$, $f*g$ denotes the convolution of the arithmetic functions $f$ and $g$, $\gamma$ is Euler's constant, 
$\langle {n \atop k} \rangle $ are the (classical) Eulerian numbers. 

Let $f:\N \to \C$ be an arithmetic function and let $k\in \N$. We are interested in asymptotic formulas for the sums
\begin{equation*}
G_{f,k}(x):= \sum_{n_1,\ldots,n_k\le x} f((n_1,\ldots,n_k)) 
\end{equation*}
and
\begin{equation*}
L_{f,k}(x): = \sum_{n_1,\ldots,n_k\le x} f([n_1,\ldots,n_k]).
\end{equation*}

By using the general identity 
\begin{equation*}
G_{f,k}(x) =  \sum_{d\le x} (\mu*f)(d) \lfloor x/d \rfloor^k,
\end{equation*}
valid for every function $f$, see Lemma \ref{Lemma_gcd}, it is possible to deduce asymptotic formulas for $G_{f,k}(x)$ in the case of various functions $f$. 
For example, if $k\ge 3$, then
\begin{equation} \label{gcd_recipr}
\sum_{n_1,\ldots,n_k\le x} \frac1{(n_1,\ldots,n_k)}=  \frac{\zeta(k+1)}{\zeta(k)} x^k+ O(x^{k-1}), 
\end{equation}
\begin{equation} \label{tau_gcd}
\sum_{n_1,\ldots,n_k\le x} \tau((n_1,\ldots,n_k)) =  \zeta(k) x^k+ O(x^{k-1}), 
\end{equation}
where $\tau(n)$ is the divisor function. See \cite[Sect.\ 1]{HLT2020} and \cite[Th.\ 3.6]{TotZha2018}.

It is more difficult to obtain asymptotic formulas for the sums $L_{f,k}(x)$ concerning the lcm of integers.
If the function $f$ is multiplicative, then $f([n_1,\ldots,n_k])$, and also $f((n_1,\ldots,n_k))$, are multiplicative functions of $k$ variables. 
Therefore, the multiple Dirichlet series 
\begin{equation*}
\sum_{n_1,\ldots,n_k=1}^{\infty} \frac{f([n_1,\ldots,n_k])}{n_1^{s_1}\cdots n_k^{s_k}} 
\end{equation*}
can be expanded into an Euler product, and the multiple convolution method can be used to deduce asymptotic formulas. For example, 
the counterpart of  \eqref{tau_gcd} is
\begin{equation*}
\sum_{n_1,\ldots,n_k\le x} \tau([n_1,\ldots,n_k]) =  x^k Q_k(\log x)+ O(x^{k-1+\theta+\varepsilon}), 
\end{equation*}
where $k\ge 2$, $Q_k(t)$ is a polynomial in $t$ of degree $k$ and $\theta$ is the exponent in the Dirichlet divisor problem. See \cite[Th.\ 3.4]{TotZha2018}.
This approach does not furnish a formula with remainder term, as a counterpart of \eqref{gcd_recipr}. It was only proved in \cite[Th.\ 2.3]{HLT2020} that
for $k\ge 3$,
\begin{equation*} 
S_k(x):= \sum_{n_1,\ldots,n_k\le x} \frac1{[n_1,\ldots,n_k]} \asymp (\log x)^{2^k-1},
\end{equation*}
and conjectured that 
\begin{equation*} 
S_k(x)=P_{2^k-1}(\log x) + O(x^{-r}),
\end{equation*}
where $P_{2^k-1}(t)$ is a polynomial in $t$ of degree $2^k-1$ and $r$ is a positive real number. This conjecture 
was proved in \cite{EST2021} by a different method, using analytic techniques. See \cite{EST2021,HLT2020,Tot2014,TotZha2018} for more details.

In this paper we obtain asymptotic formulas for the sums $G_{f,k}(x)$ and $L_{f,k}(x)$ in the case of certain classes 
of additive functions. In particular, we consider the generalized omega function 
$\Omega_{\ell}(n)= \sum_{p^\nu \mid\mid n} \nu^{\ell}$ investigated by Duncan \cite{Dun1962} and Hassani \cite{Has2018}, 
and the functions $A(n)=\sum_{p^\nu \mid\mid n} \nu p$, $A^*(n)= \sum_{p\mid n} p$, $B(n)=A(n)-A^*(n)$ studied by
Alladi and Erd\H{o}s \cite{AllErd1977}.  A key identity of our approach is the application of the inclusion-exclusion principle to additive functions. 
See Proposition \ref{Prop_key}, which may be known in the literature, but we could not find any reference. Other key results used in the proofs are Saffari's estimate obtained 
for the sum  $\sum_{n\le x} \omega(n)$ and the estimate for $\sum_{n\le x} A(n)$, where $A(n)$ is the Alladi-Erd\H{o}s function. See \cite{AllErd1977,Saf1970,Ten2015}. The main results on the asymptotic formulas are formulated in Section \ref{Sect_Main}. 
Some preliminary lemmas needed to the proofs are included in Section \ref{Sect_Prelim}, and the proofs of the main results are given in Section \ref{Sect_Proofs}.

\section{Additive functions and the inclusion-exclusion principle}

We recall that an arithmetic function $f:\N \to \C$ is additive if $f(mn)=f(m)+f(n)$ for all $m,n\in \N$ with $(m,n)=1$. If $f$ is additive, then $f(n)=\sum_{p^\nu \mid\mid n} f(p^{\nu})$. Some examples of additive functions are $\log n$, $\omega(n)$ and $\Omega(n)$. 

If $f$ is additive, then $f((m,n))+f([m,n]) = f(m)+f(n)$ holds for every $m,n\in \N$.  To see this, it is enough to consider the case when $m$ and $n$ are powers of the same prime $p$, 
namely $m=p^a$, $n=p^b$, where $a,b\in \N_0$. Now,  $f(p^{\max(a,b)})+ f(p^{\min(a,b)}) = f(p^a)+f(p^b)$ trivially holds. In a similar way, if $f$ is additive, then for every 
$n_1,n_2, n_3\in \N$,
\begin{equation*}
f([n_1,n_2,n_3])=f(n_1)+f(n_2)+f(n_3)-f((n_1,n_2))-f((n_1,n_3))-f((n_2,n_3))
\end{equation*}
\begin{equation*}
+f((n_1,n_2,n_3)).
\end{equation*}

We generalize these identities to several integers. 

\begin{prop} \label{Prop_key} Let $f:\N \to \C$ be an additive function, let $k\in \N$ and $n_1,\ldots,n_k\in \N$. Then
\begin{equation} \label{id_additive_gen}
f([n_1,\ldots,n_k]) =\sum_{1\le j\le k} (-1)^{j-1} \sum_{1\le i_1< \cdots < i_j \le  k} f((n_{i_1},\ldots,n_{i_j})).
\end{equation}
\end{prop}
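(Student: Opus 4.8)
The plan is to reduce the identity \eqref{id_additive_gen} to a purely local (prime-by-prime) statement, and then verify it by a direct inclusion–exclusion computation using the combinatorial identity for sums of binomial coefficients. First I would observe that since $f$ is additive, both sides of \eqref{id_additive_gen} decompose as sums over primes: write $n_i = \prod_p p^{a_i(p)}$, so that $[n_1,\ldots,n_k] = \prod_p p^{M(p)}$ with $M(p) = \max_i a_i(p)$, and $(n_{i_1},\ldots,n_{i_j}) = \prod_p p^{m(p;i_1,\ldots,i_j)}$ with $m(p;i_1,\ldots,i_j) = \min_{t} a_{i_t}(p)$. By additivity, $f([n_1,\ldots,n_k]) = \sum_p f\bigl(p^{M(p)}\bigr)$ and similarly for each gcd term on the right-hand side. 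Hence it suffices to prove, for each fixed prime $p$ and each choice of exponents $a_1,\ldots,a_k \in \N_0$ (dropping the $p$ from the notation),
\begin{equation*}
f\bigl(p^{\max_i a_i}\bigr) = \sum_{1\le j\le k} (-1)^{j-1} \sum_{1\le i_1 < \cdots < i_j \le k} f\bigl(p^{\min(a_{i_1},\ldots,a_{i_j})}\bigr).
\end{equation*}

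Next I would fix the prime $p$ and argue that, since only the values $f(p^0), f(p^1), \ldots, f(p^{\max_i a_i})$ occur, it is enough to prove the scalar identity obtained by collecting, for each level $r$ with $1 \le r \le \max_i a_i$, the coefficient of the "difference" $f(p^r) - f(p^{r-1})$. Concretely, for an integer $r \ge 1$ and a tuple $(a_1,\ldots,a_k)$, set $S_r := \{ i : a_i \ge r \}$ and let $s_r = |S_r|$; then $\min(a_{i_1},\ldots,a_{i_j}) \ge r$ exactly when $\{i_1,\ldots,i_j\} \subseteq S_r$, and $\max_i a_i \ge r$ exactly when $s_r \ge 1$. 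Writing $f(p^{\min(\cdots)}) = \sum_{r\ge 1} \bigl(f(p^r)-f(p^{r-1})\bigr)\mathbf 1[\min(\cdots) \ge r] + f(p^0)$ on both sides (the $f(p^0)$ terms cancel after we handle them with the $j$-alternating sum — more on that below), the identity reduces to showing, for every integer $r$ with $s_r \ge 1$,
\begin{equation*}
\sum_{1\le j\le k} (-1)^{j-1} \binom{s_r}{j} = 1,
\end{equation*}
which is just the standard fact that $\sum_{j=0}^{s_r} (-1)^j \binom{s_r}{j} = 0$ for $s_r \ge 1$, rearranged. For the level $r$ where the left side contributes nothing (i.e. $s_r = 0$, which cannot happen here since then $a_i < r$ for all $i$, so there is nothing to prove at that level), consistency is automatic; and the constant terms $f(p^0)$ on the right sum to $f(p^0)\sum_{j=1}^k (-1)^{j-1}\binom{k}{j} = f(p^0)$, matching the (implicit) $f(p^0)$ absorbed on the left when $\max_i a_i = 0$.

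I would present this cleanly by defining $g(r) := \mathbf 1[r \ge 1]$ on the left and noting $g(M) = \sum_{r=1}^{\infty} \mathbf 1[M \ge r]$ is not needed — instead, just telescope: $f(p^m) = f(p^0) + \sum_{r=1}^m \delta_r$ where $\delta_r := f(p^r) - f(p^{r-1})$, apply this with $m = \max_i a_i$ on the left and $m = \min_{t} a_{i_t}$ inside each term on the right, and swap the order of summation so that the coefficient of $\delta_r$ on the right becomes $\sum_{j\ge 1}(-1)^{j-1}\binom{s_r}{j}$. The only genuine content is the binomial identity $\sum_{j\ge 0}(-1)^j\binom{N}{j}=0$ for $N\ge 1$; everything else is bookkeeping. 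The main obstacle — really more a matter of care than difficulty — is keeping the index conventions straight: making sure the empty-subset ($j=0$) term is correctly accounted for when passing between $\sum_{j\ge 1}$ and the full alternating sum, and confirming that the reduction to a single prime is legitimate because the sum $\sum_p$ has only finitely many nonzero terms (all $n_i \ge 1$), so all manipulations are finite. An alternative, slicker route would be to induct on $k$ using the two-variable identity $f([m,n]) = f(m)+f(n)-f((m,n))$ together with the distributivity $\bigl((n_1,\ldots,n_{k-1}),n_k\bigr)$-type relations; but the local/telescoping argument above is the most transparent, so that is the one I would write out.
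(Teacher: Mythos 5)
Your proof is correct and follows essentially the same route as the paper: reduce by additivity to the case where all $n_i$ are powers of a single prime, then verify the resulting max/min identity by an alternating binomial sum $\sum_{j}(-1)^{j-1}\binom{N}{j}$. The only difference is bookkeeping --- you telescope $f(p^m)$ into increments $\delta_r=f(p^r)-f(p^{r-1})$ and count the level sets $S_r=\{i: a_i\ge r\}$, whereas the paper sorts the exponents and computes the coefficient of each $f(p^{\nu_\ell})$ directly as $\sum_j(-1)^{j-1}\binom{k-\ell}{j-1}$; both come down to the same vanishing of $(1-1)^N$.
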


\begin{proof}  It is enough to prove identity \eqref{id_additive_gen} if $n_1=p^{\nu_1}, \ldots, n_k=p^{\nu_k}$ are powers of the same prime $p$ 
with $\nu_1,\ldots,n_k\in \N_0$, that is,
\begin{equation} \label{id_p}
f(p^{\max(\nu_1,\ldots, \nu_k)}) = \sum_{1\le j\le k} (-1)^{j-1} \sum_{1\le i_1< \cdots < i_j \le  k} f(p^{\min(\nu_{i_1},\ldots,\nu_{i_j})}).
\end{equation}

By symmetry we can assume that $\nu_1\le \nu_2 \le \cdots \le \nu_k$. Then the LHS of \eqref{id_p} is $f(p^{\nu_k})$. Let $1\le \ell \le k$. On the RHS of \eqref{id_p}, 
for a fixed $j$, the term $f(p^{\nu_{\ell}})$ appears if $i_1=\ell < i_2<\cdots < i_j\le k$. This happens $\binom{k-\ell}{j-1}$ times.  Hence on the RHS the coefficient of 
$f(p^{\nu_{\ell}})$ is
\begin{equation*}
\sum_{1\le j\le k} (-1)^{j-1}  \binom{k-\ell}{j-1} = \sum_{1\le j\le k-\ell+1} (-1)^{j-1}  \binom{k-\ell}{j-1} = 
\begin{cases} 1, & \text{ if $\ell =k$,} \\ 0 , & \text{ if $\ell \ne k$,} \end{cases}
\end{equation*}
which completes the proof, similar as in the proof of the inclusion-exclusion principle.
\end{proof}

\begin{remark} {\rm Consider the additive function $\omega(n)$. For given $n_1,\ldots,n_k\in \N$ let $A_j$ be the set of prime factors of $n_j$ ($1\le j\le 
k$). 
Then $\omega(n_j)= \# A_j$,
$ \omega((n_j,n_{\ell}))= \# (A_j \cap A_{\ell})$, etc., $\omega([n_1,\ldots,n_k]) = \# (A_1\cup \ldots \cup A_k)$, and  \eqref{id_additive_gen} reduces to 
the classical 
inclusion-exclusion principle for the sets $A_j$.
}
\end{remark}

We also recall that a function $g:\N \to \C$ is multiplicative if $g(mn)=g(m)g(n)$ for all $m,n\in \N$ with $(m,n)=1$. If $g$ is multiplicative, then
$g(n)=\prod_{p^\nu \mid\mid n} g(p^{\nu})$. If $f$ is additive, then the function $g(n)=2^{f(n)}$ is multiplicative. Conversely, if $g$ is multiplicative (and positive), then the function 
$f(n)=\log g(n)$ is additive. If $g$ is multiplicative, then in a similar manner as above, 
\begin{equation} \label{g_mult}
g((m,n))g([m,n]) = g(m)g(n)
\end{equation}
holds for every $m,n\in \N$. This is well-known and is included in many textbooks. See., e.g., \cite[Ex.\ 1.9]{McC1986}. Also see \cite{Hau2012} for the 
related notion of semimultiplicative (Selberg multiplicative) functions, and \cite{ApoZuc1964} for some other similar two variables identities. 

More generally than \eqref{g_mult}, we have the next  result.

\begin{cor} Let $g:\N \to \C$ be a nonvanishing multiplicative function, let $k\in \N$ and $n_1,\ldots,n_k\in \N$. Then
\begin{equation*}
g([n_1,\ldots,n_k]) =\prod_{1\le j\le k}  \left( \prod_{1\le i_1< \cdots < i_j \le  k} g((n_{i_1},\ldots,n_{i_j}))\right)^{(-1)^{j-1}}.
\end{equation*}
\end{cor}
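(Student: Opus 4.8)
The plan is to reduce the multiplicative statement directly to the additive Proposition \ref{Prop_key} by taking logarithms, which works cleanly because $g$ is assumed nonvanishing. First I would observe that since $g$ is multiplicative and nonvanishing, it suffices (as in all the identities above) to verify the claimed formula when $n_1 = p^{\nu_1}, \ldots, n_k = p^{\nu_k}$ are powers of a single prime $p$, with $\nu_1, \ldots, \nu_k \in \N_0$; both sides are then multiplicative functions of the $k$-tuple $(n_1,\ldots,n_k)$ in the sense that they factor over the primes, so matching them prime by prime gives the general case. In the prime-power case all quantities $g((n_{i_1},\ldots,n_{i_j}))$ and $g([n_1,\ldots,n_k])$ are nonzero complex numbers.

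The core step is then to pass to logarithms. Define $f(n) := \log g(n)$ for those $n$ that actually occur, i.e.\ powers of $p$; more precisely, since $g$ restricted to powers of $p$ satisfies $g(p^{a+b}) = g(p^a)g(p^b)$ only under coprimality (which fails for prime powers), I cannot literally claim $f = \log g$ is additive on all of $\N$, but I do not need that: I only need the combinatorial identity \eqref{id_p}. So I would instead argue as follows. Apply Proposition \ref{Prop_key} is not quite available for an arbitrary $g$; rather, I would re-run the combinatorial counting argument from the proof of Proposition \ref{Prop_key} in multiplicative form. Assume by symmetry $\nu_1 \le \cdots \le \nu_k$. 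On the right-hand side, for each $\ell$ with $1 \le \ell \le k$, the factor $g(p^{\nu_\ell})$ (coming from terms where $i_1 = \ell$) appears with total exponent $\sum_{1 \le j \le k} (-1)^{j-1}\binom{k-\ell}{j-1}$, which is $1$ if $\ell = k$ and $0$ otherwise, exactly as computed in the proof of Proposition \ref{Prop_key}. Hence the right-hand side equals $g(p^{\nu_k}) = g(p^{\max(\nu_1,\ldots,\nu_k)}) = g([n_1,\ldots,n_k])$, as required. Here the nonvanishing hypothesis is what makes the negative exponents (the $(-1)^{j-1}$ with $j$ even) meaningful.

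Alternatively, and more slickly, once the problem is reduced to powers of a fixed prime $p$ I can write each relevant value as $g(p^{\nu}) = 2^{f_p(\nu)}$ where $f_p(\nu) := \log_2 g(p^\nu)$ is an arbitrary complex-valued function of $\nu \in \N_0$ with $f_p(0) = 0$; then the multiplicative identity to be proved is precisely the exponential of the additive identity \eqref{id_p} applied to the additive function whose value at $p^\nu$ is $f_p(\nu)$ and which vanishes on all other prime powers. Since \eqref{id_p} was established for an arbitrary assignment of values $f(p^\nu)$ (the proof only used the combinatorics of $\min$, $\max$ and binomial coefficients, never any global additivity), this gives the result immediately upon exponentiating.

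I do not expect any real obstacle: the only point requiring a little care is the bookkeeping in the reduction to a single prime, namely checking that both sides of the claimed identity are "prime-by-prime" multiplicative in the tuple — for the gcd and lcm this is standard since $(n_{i_1},\ldots,n_{i_j})$ and $[n_1,\ldots,n_k]$ each have $p$-adic valuation depending only on the $p$-adic valuations of the $n_i$'s — and that the nonvanishing of $g$ is genuinely needed and used (to divide, i.e.\ to allow the exponents $(-1)^{j-1} = -1$). Everything else is a direct translation of the already-proved Proposition \ref{Prop_key} through the map $f \mapsto 2^f$.
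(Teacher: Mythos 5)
Your proposal is correct and is essentially the paper's own argument: the paper simply says to apply Proposition \ref{Prop_key} ``formally'' to $f=\log g$, and your prime-by-prime reduction (or the $g(p^\nu)=2^{f_p(\nu)}$ device) is exactly the rigorous version of that formal step, since the proof of \eqref{id_p} uses only the arbitrary values of $f$ at prime powers. The one small simplification available to you: since $g$ is nonvanishing you may simply pick any complex logarithm of each $g(p^\nu)$, define $f$ additively from these values, and exponentiate the conclusion of Proposition \ref{Prop_key}; no branch-consistency issue arises because $e^{f(n)}=\prod_{p^\nu\mid\mid n}e^{f(p^\nu)}=g(n)$ automatically.
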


\begin{proof} Apply (formally) Proposition \ref{Prop_key} to the additive function $f(n)=\log g(n)$.
\end{proof}

If $g(n)=n$, then this is known (see, e.g., \cite[Ex.\, 3.4.56]{Ros2000}) and gives the lcm of several integers in terms of the gcd's:
\begin{equation*}
[n_1,\ldots,n_k] = \frac{n_1\cdots n_k}{(n_1,n_2)(n_1,n_3)\cdots (n_{k-1},n_k)} \cdot \frac{(n_1,n_2,n_3)\cdots}{(n_1,n_2,n_3,n_4)\cdots} \cdots .
\end{equation*}

\section{Asymptotic formulas for multivariable sums} \label{Sect_Main}

\subsection{The class ${\cal F}_0$ of omega-type functions} \label{Sect_F_0}

Let ${\cal F}_0$ denote the class of additive functions $f:\N \to \C$ such that $f(p)=1$ for every prime $p$ and $f(p^{\nu}) \ll \nu^{\ell}$ holds uniformly 
for the primes $p$ and $\nu \ge 2$, where $\ell \in \N_0$ is some integer. For example, the functions $\omega(n)$ and $\Omega(n)$ are in ${\cal F}_0$. More generally, the 
function $\Omega_{\ell}(n)=\sum_{p^\nu \mid\mid n} \nu^{\ell}$, with $\ell \in \N_0$, is in the class ${\cal F}_0$. Note that $\Omega_0(n)=\omega(n)$, $\Omega_1(n)=\Omega(n)$. The function $\Omega_{\ell}(n)$ was defined by Duncan \cite{Dun1962} and an asymptotic formula for $\sum_{n\le x} 
\Omega_{\ell}(n)$ was obtained in that paper.

Another example of a function in the class ${\cal F}_0$ is given by $T_{\ell}(n)= \sum_{p^\nu \mid\mid n} \left(\binom{\nu}{\ell}\right)$, 
where $\left(\binom{n}{k}\right) = \binom{n+k-1}{k}=\frac{n(n+1)\cdots (n+k-1)}{k!}$ is the number of $k$--combinations with repetitions of $n$ elements. Observe that $T_0(n)=\omega(n)$, $T_1(n) = \Omega(n)$. 

Saffari \cite{Saf1970} proved that the estimate  
\begin{equation} \label{est_Saffari}
\sum_{n\le x} \omega(n)= x\log \log x + M x + x \sum_{j=1}^N \frac{a_j}{(\log x)^j} + O\left(\frac{x}{(\log x)^{N+1}}\right)     
\end{equation}
holds for every fixed integer $N\ge 1$, where $M$ is the Mertens constant defined by
\begin{equation*}
M = \gamma + \sum_p  \left( \log \left(1 -\frac1{p}\right)  + \frac1{p} \right) \approx 0.2614
\end{equation*}
and the constants $a_j$ ($1\le j\le N$) are given by 
\begin{equation} \label{const_a_j}
a_j = - \int_1^{\infty} \frac{t-\lfloor t \rfloor}{t^2}(\log t)^{j-1} \, dt,
\end{equation}
in particular, $a_1=\gamma-1$. Also see \cite[Sect.\ 4.3.11]{Bor2020}.

By using the proximity of the functions $f\in {\cal F}_0$ and the function $\omega$ we first prove the following result.

\begin{theorem} \label{Th_f_additive} Let $f$ be a function in class ${\cal F}_0$. Then
\begin{equation*}
\sum_{n\le x} f(n) = x \log \log x + C_f x + x \sum_{j=1}^N \frac{a_j}{(\log x)^j} + O\left(\frac{x}{(\log x)^{N+1}}\right)     
\end{equation*}
for every fixed $N\ge 1$, where the constant $C_f$ is given by
\begin{equation*}
C_f = \gamma + \sum_p  \left( \log \left(1 -\frac1{p}\right)  + \left(1-\frac1{p}\right)  \sum_{\nu=1}^{\infty} \frac{f(p^{\nu})}{p^{\nu}} \right),
\end{equation*}
and the constants $a_j$  \textup{($1\le j\le N$)} are as in \eqref{const_a_j}.
\end{theorem}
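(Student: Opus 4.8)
The plan is to compare $f\in{\cal F}_0$ with $\omega$ by writing $f = \omega + h$, where $h(n) = \sum_{p^\nu\mid\mid n, \nu\ge 2}(f(p^\nu)-1)$, and to show that the sum of $h$ over $n\le x$ contributes only to the constant term $C_f$ (more precisely, it contributes a term of the form $c\,x + o(x/(\log x)^{N+1})$, or even $c\,x + O(\sqrt x)$), so that the whole asymptotic expansion of $\sum_{n\le x}\omega(n)$ in \eqref{est_Saffari} carries over verbatim except that $M$ is replaced by $C_f$. Since $h$ is additive and supported on prime powers with exponent at least $2$, I would use the standard device
\begin{equation*}
\sum_{n\le x} h(n) = \sum_{p,\,\nu\ge 2} (f(p^\nu)-1) \left\lfloor \frac{x}{p^\nu}\right\rfloor.
\end{equation*}

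The main point is that this is an \emph{absolutely convergent} business: using $f(p^\nu)\ll \nu^\ell$ we have $f(p^\nu)-1\ll \nu^\ell$, and $\sum_{p}\sum_{\nu\ge 2}\nu^\ell/p^\nu$ converges (for each prime the inner sum is $O(1/p^2)$, and $\sum_p 1/p^2<\infty$). Hence replacing $\lfloor x/p^\nu\rfloor$ by $x/p^\nu + O(\min(1, x/p^\nu))$ gives
\begin{equation*}
\sum_{n\le x} h(n) = x \sum_{p,\,\nu\ge 2} \frac{f(p^\nu)-1}{p^\nu} + O\!\left(\sum_{p^\nu\le x,\ \nu\ge 2} \nu^\ell\right) + O\!\left(x\sum_{p^\nu> x,\ \nu\ge 2} \frac{\nu^\ell}{p^\nu}\right).
\end{equation*}
The first error term is $O(\sqrt x\,(\log x)^{O(1)})$ since the prime powers with $\nu\ge 2$ up to $x$ number $O(\sqrt x)$ with small weights, and the tail $x\sum_{p^\nu>x}\nu^\ell/p^\nu$ is also $O(\sqrt x\,(\log x)^{O(1)})$ (the contribution of $\nu=2$ alone is $x\sum_{p>\sqrt x}1/p^2\ll\sqrt x$, and higher $\nu$ are smaller). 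Either way the error is $O(x^{1/2+\varepsilon})$, which is absorbed into $O(x/(\log x)^{N+1})$.

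Next I would record the constant. Writing $\sum_{n\le x} f(n) = \sum_{n\le x}\omega(n) + \sum_{n\le x} h(n)$ and inserting Saffari's formula \eqref{est_Saffari} together with the just-computed $\sum_{n\le x}h(n) = x\sum_{p,\nu\ge2}\frac{f(p^\nu)-1}{p^\nu} + O(x^{1/2+\varepsilon})$, the new constant is
\begin{equation*}
C_f = M + \sum_p \sum_{\nu\ge 2} \frac{f(p^\nu)-1}{p^\nu}.
\end{equation*}
It remains a purely formal manipulation to check this equals the stated expression. Indeed, starting from $M=\gamma+\sum_p(\log(1-1/p)+1/p)$, and using $f(p)=1$, one has $\sum_{\nu\ge 2}\frac{f(p^\nu)-1}{p^\nu} = \sum_{\nu\ge 1}\frac{f(p^\nu)}{p^\nu} - \frac1p - \sum_{\nu\ge 2}\frac1{p^\nu} = \sum_{\nu\ge1}\frac{f(p^\nu)}{p^\nu} - \frac{1}{p-1}$; on the other hand $\frac1p - \frac{1}{p-1} = -\frac{1}{p(p-1)}$, and one checks that $\frac1p\cdot\frac{1}{p-1}$ matches the factor $(1-1/p)$ multiplying $\sum_{\nu\ge1}f(p^\nu)/p^\nu$ after combining with the $1/p$ already present in $M$; collecting the per-prime terms gives exactly $\log(1-1/p) + (1-1/p)\sum_{\nu\ge1}f(p^\nu)/p^\nu$, and the convergence of this series over $p$ follows from $\log(1-1/p) = -1/p + O(1/p^2)$ together with $(1-1/p)\sum_{\nu\ge1}f(p^\nu)/p^\nu = 1/p + O(1/p^2)$.

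I expect the only genuine (and still minor) obstacle to be bookkeeping of the constant: one must be careful that the rearrangement of the doubly-infinite sum defining $C_f$ is legitimate, which is guaranteed by the absolute convergence established above, and that the $N$-term expansion really is unchanged — but this is automatic, since $h$ contributes no term of size $x/(\log x)^j$ at all, only a constant multiple of $x$ plus a power-saving error. So the proof is essentially: (i) split $f=\omega+h$; (ii) evaluate $\sum h(n)$ by absolute convergence with an $O(x^{1/2+\varepsilon})$ error; (iii) add to Saffari's formula and simplify the constant.
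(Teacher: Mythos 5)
Your overall strategy --- write $f=\omega+h$ with $h$ supported on prime powers of exponent at least $2$, evaluate $\sum_{n\le x}h(n)$ by absolute convergence with a power-saving error, and add Saffari's expansion \eqref{est_Saffari} --- is exactly the paper's (which splits the sum coming from Lemma \ref{Lemma_additiv_gcd} into $S_1=\sum_{n\le x}\omega(n)$ and a remainder $S_2$). However, your ``standard device'' is misstated, and this is a genuine error rather than a typo: for an additive function $g$ the correct identity is $\sum_{n\le x}g(n)=\sum_{p^\nu\le x}\bigl(g(p^\nu)-g(p^{\nu-1})\bigr)\lfloor x/p^\nu\rfloor$, because $\lfloor x/p^\nu\rfloor$ counts the $n\le x$ with $p^\nu\mid n$, whereas $h(n)=\sum_{p^\nu\mid\mid n,\ \nu\ge2}(f(p^\nu)-1)$ is defined through \emph{exact} divisibility. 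Your formula $\sum_{n\le x}h(n)=\sum_{p,\,\nu\ge2}(f(p^\nu)-1)\lfloor x/p^\nu\rfloor$ therefore carries the wrong coefficients for $\nu\ge3$ (they should be $f(p^\nu)-f(p^{\nu-1})$). A concrete check: for $f=\Omega$ and $x=8$ your right-hand side equals $1\cdot\lfloor 8/4\rfloor+2\cdot\lfloor 8/8\rfloor=4$, while $\sum_{n\le 8}\bigl(\Omega(n)-\omega(n)\bigr)=h(4)+h(8)=1+2=3$.

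This propagates into the constant. Writing $\Sigma_p:=\sum_{\nu\ge1}f(p^\nu)/p^\nu$, your main term is $x\sum_p\sum_{\nu\ge2}\frac{f(p^\nu)-1}{p^\nu}=x\sum_p\bigl(\Sigma_p-\frac{1}{p-1}\bigr)$, whereas the correct contribution is $x\sum_p\bigl((1-\frac1p)\Sigma_p-\frac1p\bigr)$; the per-prime discrepancy is $\frac{\Sigma_p}{p}-\frac{1}{p(p-1)}$, which vanishes only when $f=\omega$ (for $f=\Omega$ it equals $\frac{1}{p(p-1)^2}$). So the step where you assert that your constant ``matches'' the stated $C_f$ is not a bookkeeping formality --- as written, it is false. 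The repair is immediate: use Lemma \ref{Lemma_additiv_gcd} with $k=1$ (equivalently, count exact divisibility by $\lfloor x/p^\nu\rfloor-\lfloor x/p^{\nu+1}\rfloor$ before rearranging), after which your error analysis goes through and reproduces the paper's proof essentially verbatim. One secondary remark: your dismissal of the tail $x\sum_{p\le\sqrt x}\sum_{\nu>\log x/\log p}\nu^\ell/p^\nu$ with ``higher $\nu$ are smaller'' is the one place where a real estimate is needed; the paper devotes Lemma \ref{le:toth} and \eqref{S_1} to it.
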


Note that a weaker asymptotic formula for the sum $\sum_{n\le x} f(n)$ is given in \cite[Th.\ 6.19]{DeKLuc2012} under the more restrictive conditions $f$ additive, $f(p)=1$ and for all primes $p$ 
and $f(p^{\nu})-f(p^{\nu-1}) = O(1)$ uniformly for the primes $p$ and $\nu \ge 2$, satisfied by the functions $\omega(n)$ and $\Omega(n)$, but not by $\Omega_{\ell}(n)$ and $T_{\ell}(n)$ with $\ell \ge 2$.

\begin{cor} \label{Cor_Omega_ell} The estimate of Theorem \ref{Th_f_additive} holds for the function $f(n)=\Omega_{\ell}(n)$ \textup{($\ell \in \N_0$)} with the constant
\begin{equation*}
C_{\Omega_{\ell}} = \gamma + \sum_p  \left( \log \left(1 -\frac1{p}\right)  +  \frac1{p} \left(1-\frac1{p}\right)^{-\ell} \ \sum_{\nu=0}^{\ell -1 } 
\frac{\langle {\ell \atop \nu} \rangle }{p^{\nu}} \right),
\end{equation*}
where $\langle {\ell \atop \nu} \rangle$ are the Eulerian numbers to be defined in Section \ref{Sect_Eulerian_numbers}, and the inner sum is considered to be $1$ if $\ell =0$.
\end{cor}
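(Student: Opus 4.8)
The plan is to invoke Theorem~\ref{Th_f_additive} directly and then make the resulting constant explicit. First I would verify that $\Omega_{\ell}\in{\cal F}_0$: it is additive, $\Omega_{\ell}(p)=1^{\ell}=1$ for every prime $p$, and $\Omega_{\ell}(p^{\nu})=\nu^{\ell}\ll\nu^{\ell}$ holds (with implied constant $1$) uniformly in $p$ and $\nu\ge2$, so the exponent in the definition of ${\cal F}_0$ may be taken to be $\ell$. Hence Theorem~\ref{Th_f_additive} applies and yields the asserted expansion, with
\begin{equation*}
C_{\Omega_{\ell}} = \gamma + \sum_p\left(\log\left(1-\tfrac1p\right) + \left(1-\tfrac1p\right)\sum_{\nu=1}^{\infty}\frac{\nu^{\ell}}{p^{\nu}}\right).
\end{equation*}
So the only task left is to rewrite the inner series $\sum_{\nu\ge1}\nu^{\ell}p^{-\nu}$ in closed form.

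For this I would use the classical generating-function identity
\begin{equation*}
\sum_{\nu\ge0}\nu^{\ell}x^{\nu} = \frac{x}{(1-x)^{\ell+1}}\sum_{\nu=0}^{\ell-1}\langle{\ell\atop\nu}\rangle x^{\nu}\qquad(\ell\ge1,\ |x|<1),
\end{equation*}
that is, $\sum_{\nu\ge0}\nu^{\ell}x^{\nu}=xA_{\ell}(x)/(1-x)^{\ell+1}$ where $A_{\ell}$ is the Eulerian polynomial; one can either quote it from Section~\ref{Sect_Eulerian_numbers} or prove it by induction on $\ell$ using the operator $x\tfrac{d}{dx}$, which sends $\sum_{\nu\ge0}\nu^{\ell}x^{\nu}$ to $\sum_{\nu\ge0}\nu^{\ell+1}x^{\nu}$ and the Eulerian polynomials into one another. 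Specializing to $x=1/p$ and multiplying by $1-1/p$ then gives
\begin{equation*}
\left(1-\tfrac1p\right)\sum_{\nu=1}^{\infty}\frac{\nu^{\ell}}{p^{\nu}} = \frac1p\left(1-\tfrac1p\right)^{-\ell}\sum_{\nu=0}^{\ell-1}\frac{\langle{\ell\atop\nu}\rangle}{p^{\nu}},
\end{equation*}
which is precisely the summand in the claimed formula for $C_{\Omega_{\ell}}$. For $\ell=0$ one has $\Omega_0=\omega$ and $\sum_{\nu\ge1}p^{-\nu}=\tfrac1p(1-\tfrac1p)^{-1}$, giving the summand $1/p$, which matches the stated convention that the inner sum equals $1$; in this case $C_{\Omega_0}=M$, the Mertens constant, as it must.

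I do not expect a genuine obstacle: granted Theorem~\ref{Th_f_additive}, the corollary is a short power-series manipulation. The only points that need a little care are the Eulerian-polynomial identity (and checking that the degenerate cases $\ell=0$ and $\ell=1$, where $A_1(x)=1$, are covered by the stated formula and convention) and a brief remark that the product over $p$ defining $C_{\Omega_{\ell}}$ converges; the latter follows since $\log(1-1/p)+1/p=O(1/p^2)$ while $\left(1-\tfrac1p\right)\sum_{\nu\ge1}\nu^{\ell}p^{-\nu}=1/p+O(1/p^2)$, so each summand is $O(1/p^2)$.
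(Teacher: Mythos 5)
Your proposal is correct and follows exactly the paper's route: the paper's proof of this corollary is the one-line ``Follows from Theorem \ref{Th_f_additive} and identity \eqref{series_id}'', i.e.\ verify $\Omega_{\ell}\in{\cal F}_0$, apply the theorem, and rewrite $\sum_{\nu\ge 1}\nu^{\ell}p^{-\nu}$ via the Eulerian generating function, which is precisely what you do. Your extra checks (the $\ell=0,1$ degenerate cases and the convergence of the sum over $p$) are sound but not needed beyond what the paper records.
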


The result of Corollary \ref{Cor_Omega_ell} was proved by Hassani \cite{Has2018}, also by using Saffari's estimate 
for the sum  $\sum_{n\le x} \omega(n)$, but invoking some different arguments and without referring to the Eulerian numbers.

\begin{cor} \label{Cor_T_ell} The estimate of Theorem \ref{Th_f_additive} holds for the function $f(n)=T_{\ell}(n)$ \textup{($\ell \in \N_0$)} with the constant
\begin{equation*}
C_{T_{\ell}} = \gamma + \sum_p  \left( \log \left(1 -\frac1{p}\right)  + \frac1{p} \left(1-\frac1{p}\right)^{-\ell}\right).
\end{equation*}
\end{cor}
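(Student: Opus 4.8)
The plan is to apply Theorem \ref{Th_f_additive} directly to $f=T_{\ell}$, since we have already observed that $T_{\ell}\in{\cal F}_0$ (indeed $T_{\ell}(p)=\left(\binom{1}{\ell}\right)=\binom{\ell}{\ell}=1$ for every prime $p$, and $\left(\binom{\nu}{\ell}\right)=\binom{\nu+\ell-1}{\ell}\ll_\ell \nu^{\ell}$). So the only thing left to do is to evaluate the inner series appearing in the constant $C_f$, namely
\begin{equation*}
\sum_{\nu=1}^{\infty} \frac{T_{\ell}(p^{\nu})}{p^{\nu}} = \sum_{\nu=1}^{\infty} \binom{\nu+\ell-1}{\ell} \frac{1}{p^{\nu}},
\end{equation*}
and show it equals $\frac{1}{p}\left(1-\frac1p\right)^{-\ell-1}$, after which plugging into the formula for $C_f$ — where the factor $\left(1-\frac1p\right)$ out front cancels one power — yields $C_{T_{\ell}}$ as stated.

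First I would recall the generating function identity for combinations with repetitions: $\sum_{n\ge 0}\left(\binom{n}{\ell}\right)x^n = \sum_{n\ge 0}\binom{n+\ell-1}{\ell}x^n = (1-x)^{-\ell}$, valid for $|x|<1$ (this is the standard negative binomial series, or equivalently the $\ell$-fold Cauchy product of $\sum x^n$). Here the $n=0$ term is $\binom{\ell-1}{\ell}=0$ when $\ell\ge 1$ and is $1$ when $\ell=0$; in either case the identity $\sum_{n\ge 0}\binom{n+\ell-1}{\ell}x^n=(1-x)^{-\ell}$ holds. Setting $x=1/p$ (so $|x|<1$ since $p\ge 2$) and separating the $\nu=0$ term gives
\begin{equation*}
\sum_{\nu=1}^{\infty} \binom{\nu+\ell-1}{\ell} \frac{1}{p^{\nu}} = \left(1-\frac1p\right)^{-\ell} - \delta_{\ell,0}.
\end{equation*}
For $\ell\ge 1$ this is simply $\left(1-\frac1p\right)^{-\ell}$; for $\ell=0$ it is $0$, consistent with $T_0=\omega$ and the original Saffari constant $M$. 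Actually, to match the claimed shape $\frac{1}{p}\left(1-\frac1p\right)^{-\ell-1}$, note $\left(1-\frac1p\right)^{-\ell}-1 = \left(1-\frac1p\right)^{-\ell}\left(1-\left(1-\frac1p\right)^{\ell}\right)$; a cleaner route for $\ell\ge 1$ is to write $\sum_{\nu\ge1}\binom{\nu+\ell-1}{\ell}p^{-\nu} = \frac1p\sum_{\mu\ge0}\binom{\mu+\ell}{\ell}p^{-\mu} = \frac1p\left(1-\frac1p\right)^{-\ell-1}$, using the generating function again with the index shift $\mu=\nu-1$ and the identity $\binom{\mu+\ell}{\ell}=\binom{(\mu+1)+\ell-1}{\ell}$. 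Then
\begin{equation*}
\left(1-\frac1p\right)\sum_{\nu=1}^{\infty}\frac{T_{\ell}(p^{\nu})}{p^{\nu}} = \left(1-\frac1p\right)\cdot\frac1p\left(1-\frac1p\right)^{-\ell-1} = \frac1p\left(1-\frac1p\right)^{-\ell},
\end{equation*}
and substituting into $C_f$ gives exactly $C_{T_{\ell}}$. One should also check convergence of the resulting Euler-type sum over $p$, but this follows from the elementary estimate $\log(1-1/p)+\frac1p\left(1-\frac1p\right)^{-\ell} = O_\ell(1/p^2)$ as $p\to\infty$, just as in Saffari's constant $M$; this is already implicit in Theorem \ref{Th_f_additive}.

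There is essentially no obstacle here: the corollary is a mechanical specialization of Theorem \ref{Th_f_additive} once the per-prime series is summed in closed form, and that summation is just the negative binomial generating function. The only point requiring a little care is the bookkeeping around the $\ell=0$ case (where the inner series vanishes and one recovers the Mertens constant $M$) and making sure the index shift and the cancellation of the $\left(1-\frac1p\right)$ prefactor are done correctly — but these are routine. For $\ell=0$ one checks $\frac1p\left(1-\frac1p\right)^{0}=\frac1p$, so $C_{T_0}=\gamma+\sum_p\left(\log(1-1/p)+1/p\right)=M$, in agreement with $T_0=\omega$ and \eqref{est_Saffari}.
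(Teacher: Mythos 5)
Your proposal reaches the correct constant and follows exactly the paper's route: the corollary is Theorem \ref{Th_f_additive} specialized to $T_{\ell}$ (which lies in ${\cal F}_0$ since $T_{\ell}(p)=\binom{\ell}{\ell}=1$ and $\binom{\nu+\ell-1}{\ell}\ll_{\ell}\nu^{\ell}$), combined with the negative binomial identity \eqref{id_binom}; your ``cleaner route'' --- the shift $\mu=\nu-1$, giving $\sum_{\nu\ge 1}\binom{\nu+\ell-1}{\ell}p^{-\nu}=\frac1p\left(1-\frac1p\right)^{-\ell-1}$, after which the prefactor $\left(1-\frac1p\right)$ in $C_f$ cancels one power --- is precisely the intended computation. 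However, the first version of the summation that you write down is wrong and should be deleted: the identity $\sum_{n\ge 0}\binom{n+\ell-1}{\ell}x^n=(1-x)^{-\ell}$ misplaces the summation index (in \eqref{id_binom} the running index sits in the lower slot of the binomial coefficient; for $\ell=1$ your left-hand side is $\sum_{n\ge0}nx^n=x(1-x)^{-2}$, not $(1-x)^{-1}$), and the consequent claim that $\sum_{\nu\ge1}T_{\ell}(p^{\nu})p^{-\nu}=\left(1-\frac1p\right)^{-\ell}-\delta_{\ell,0}$ is false --- in particular for $\ell=0$ that sum is $1/(p-1)$, not $0$ (were it $0$, the series defining $C_{T_0}$ would diverge), exactly as your own closing check $C_{T_0}=M$ implicitly confirms. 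Since these erroneous lines are superseded by the correct index-shift computation and do not feed into the final constant, the proof stands once they are removed.
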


Next we deduce the following estimates for the sums $G_{f,k}(x)$ with $k\ge 2$.

\begin{theorem} \label{Th_f_additive_gcd} Let $f$ be a function in class ${\cal F}_0$ and let $k\in \N$, $k\ge 2$. Then
\begin{equation*}
\sum_{n_1,\ldots,n_k\le x} f((n_1,\ldots,n_k)) = D_{f,k} x^k + \begin{cases} O(x^{k-1}), & \text{ if $k\ge 3$}, \\ O(x\log \log x), & \text{ if $k=2$},
\end{cases}
\end{equation*}
where the constant $D_{f,k}$ is given by
\begin{equation} \label{D_f_k}
D_{f,k} = \sum_p   \left(1-\frac1{p^k}\right)  \sum_{\nu=1}^{\infty} \frac{f(p^{\nu})}{p^{\nu k}}.
\end{equation}
\end{theorem}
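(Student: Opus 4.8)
The plan is to use the identity of Lemma~\ref{Lemma_gcd}, namely
\[
G_{f,k}(x) = \sum_{d\le x} (\mu*f)(d)\,\lfloor x/d\rfloor^k ,
\]
and to exploit the arithmetic of the function $h:=\mu*f$. The first point to establish is that for an additive $f$ the function $h$ is supported on prime powers: a short computation (using that $\mu$ is multiplicative, $f$ is additive and $f(1)=0$) shows $(\mu*f)(mn)=0$ whenever $(m,n)=1$ and $m,n>1$, while $h(p^\nu)=f(p^\nu)-f(p^{\nu-1})$ for every prime power $p^\nu$. For $f\in{\cal F}_0$ this yields $h(p)=1$ and $h(p^\nu)\ll \nu^\ell$ uniformly in $p$ and $\nu\ge 2$. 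Grouping by prime powers and rearranging the inner sum (a shift of the summation index) then gives
\[
\sum_{d=1}^{\infty}\frac{h(d)}{d^k} = \sum_p\sum_{\nu=1}^{\infty}\frac{f(p^\nu)-f(p^{\nu-1})}{p^{\nu k}} = \sum_p\left(1-\frac1{p^k}\right)\sum_{\nu=1}^{\infty}\frac{f(p^\nu)}{p^{\nu k}} = D_{f,k},
\]
and the bounds on $h(p^\nu)$ show this series converges absolutely for every $k\ge 2$ (the primes contribute $\sum_p p^{-k}$ and the higher prime powers contribute $\sum_p\sum_{\nu\ge 2}\nu^\ell p^{-\nu k}$, both convergent).

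Next I would extract the main term by replacing $\lfloor x/d\rfloor^k$ with $(x/d)^k$. Since $|\lfloor y\rfloor^k-y^k|\le k y^{k-1}$ for $y\ge 1$, this costs $O\big(x^{k-1}\sum_{d\le x}|h(d)|/d^{k-1}\big)$, so that
\[
G_{f,k}(x) = x^k\sum_{d\le x}\frac{h(d)}{d^k} + O\!\left(x^{k-1}\sum_{d\le x}\frac{|h(d)|}{d^{k-1}}\right).
\]
For the main term I would complete the sum to infinity and bound the tail $x^k\sum_{d>x}h(d)/d^k$ using the prime-power support of $h$: the contribution of the primes is $x^k\sum_{p>x}p^{-k}\ll x$, and the contribution of the higher prime powers $p^\nu>x$ with $\nu\ge 2$ is smaller still. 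Indeed, splitting according to whether $x^{1/\nu}\ge 2$ and using $\sum_{p>x^{1/\nu}}p^{-\nu k}\ll x^{1/\nu-k}$ in the first case and $\sum_p p^{-\nu k}\ll 2^{-\nu k}$ in the second, the sum over $\nu\ge 2$ comes out $\ll x^{1/2-k}(\log x)^{\ell+1}$, so this part contributes $\ll x^{1/2}(\log x)^{\ell+1}=o(x)$. Hence $x^k\sum_{d\le x}h(d)/d^k = D_{f,k}x^k+O(x)$.

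Finally, for the error term, grouping by prime powers again gives $\sum_{d\le x}|h(d)|/d^{k-1}\le \sum_{p\le x}p^{-(k-1)}+\sum_{p^\nu\le x,\,\nu\ge 2}|h(p^\nu)|\,p^{-\nu(k-1)}$. When $k\ge 3$ we have $k-1\ge 2$, both sums are $O(1)$, and the error is $O(x^{k-1})$; together with the $O(x)$ from the main term this gives the asserted $O(x^{k-1})$, since $k-1\ge 2$. When $k=2$ the first sum is $\sum_{p\le x}1/p=\log\log x+O(1)$ by Mertens' theorem, while the second is $\ll \sum_{\nu\ge 2}\nu^\ell\sum_p p^{-\nu}=O(1)$, so the error is $O(x\log\log x)$, which also absorbs the $O(x)$ term. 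The only mildly delicate point is keeping the tail estimate over the higher prime powers uniform in $\nu$; apart from that the argument rests only on elementary estimates ($\sum_{p>x}p^{-k}\ll x^{1-k}$ for $k\ge 2$ and Mertens' theorem), much as in the treatment of \eqref{gcd_recipr} and \eqref{tau_gcd}.
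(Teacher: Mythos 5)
Your argument is correct and follows essentially the same route as the paper: Lemma~\ref{Lemma_gcd} combined with the prime-power support of $\mu*f$ (the paper's Lemma~\ref{Lemma_Mobius}, which you re-derive) reduces everything to $\sum_{p^\nu\le x}(f(p^\nu)-f(p^{\nu-1}))\lfloor x/p^\nu\rfloor^k$, after which you expand the floor and complete the series exactly as in the paper's proof. Your tail bounds are cruder than the paper's (which invokes the refined estimate \eqref{S_k}), but they are fully sufficient since the tail is absorbed by the stated error terms $O(x^{k-1})$ and $O(x\log\log x)$.
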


\begin{cor} \label{Cor_Omega_ell_gcd} The estimate of Theorem \ref{Th_f_additive_gcd} holds for the functions $f(n)=\Omega_{\ell}(n)$ and $f(n)=T_{\ell}(n)$ \textup{($\ell \in \N_0$)} with the constants
\begin{equation*}
D_{\Omega_{\ell},k} = \sum_p \frac1{p^k} \left(1-\frac1{p^k}\right)^{-\ell} \ \sum_{\nu=0}^{\ell -1 } 
\frac{\langle {\ell \atop \nu} \rangle }{p^{\nu k}},
\end{equation*}
where the inner sum is considered to be $1$ if $\ell =0$, and 
\begin{equation*}
D_{T_{\ell},k} =  \sum_p \frac1{p^k} \left(1-\frac1{p^k}\right)^{-\ell}.
\end{equation*}

In particular, Theorem \ref{Th_f_additive_gcd} holds for the functions $\omega(n)$ and $\Omega(n)$ with the constants
\begin{equation*}
D_{\omega,k} = \sum_p  \frac1{p^k}, \quad 
D_{\Omega,k} =  \sum_p \frac1{p^k-1}.
\end{equation*}
\end{cor}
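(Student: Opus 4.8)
The plan is to obtain the corollary directly from Theorem~\ref{Th_f_additive_gcd}: both $\Omega_{\ell}$ and $T_{\ell}$ were already shown to belong to the class ${\cal F}_0$ in Section~\ref{Sect_F_0}, so the asymptotic formula together with its error term is automatic, and the only thing left is to evaluate the constant $D_{f,k}$ of \eqref{D_f_k} for these two families. Concretely, for a fixed prime $p$ put $z=p^{-k}$ and let $S_p=S_p(f):=\sum_{\nu\ge 1} f(p^{\nu})\,p^{-\nu k}=\sum_{\nu\ge 1} f(p^{\nu})\,z^{\nu}$; then $D_{f,k}=\sum_p (1-z)\,S_p$, and it remains to compute $S_p$ in closed form from a suitable power-series identity.

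First take $f=\Omega_{\ell}$ with $\ell\ge 1$, so $f(p^{\nu})=\nu^{\ell}$ and $S_p=\sum_{\nu\ge 0}\nu^{\ell}z^{\nu}$. Here I would invoke the generating-function form of the Eulerian numbers recalled in Section~\ref{Sect_Eulerian_numbers}, namely
\[
\sum_{\nu\ge 0}\nu^{\ell}z^{\nu}=\frac{z}{(1-z)^{\ell+1}}\sum_{\nu=0}^{\ell-1}\left\langle{\ell\atop \nu}\right\rangle z^{\nu}\qquad(|z|<1,\ \ell\ge 1),
\]
so that $(1-z)S_p=z\,(1-z)^{-\ell}\sum_{\nu=0}^{\ell-1}\left\langle{\ell\atop\nu}\right\rangle z^{\nu}$; summing over $p$ gives precisely the stated value of $D_{\Omega_{\ell},k}$. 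The degenerate case $\ell=0$, that is $f=\omega$, must be treated by hand: then $S_p=\sum_{\nu\ge 1}z^{\nu}=z/(1-z)$, hence $D_{\omega,k}=\sum_p z=\sum_p p^{-k}$, which matches the formula under the stated convention that the inner sum is $1$. Finally $\ell=1$ yields $\Omega_1=\Omega$ with $\left\langle{1\atop 0}\right\rangle=1$, so $D_{\Omega,k}=\sum_p z/(1-z)=\sum_p (p^{k}-1)^{-1}$.

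Now take $f=T_{\ell}$, so $f(p^{\nu})=\left(\binom{\nu}{\ell}\right)=\binom{\nu+\ell-1}{\ell}=\binom{\nu+\ell-1}{\nu-1}$. With $z=p^{-k}$ and the negative binomial series $\sum_{m\ge 0}\binom{m+\ell}{m}z^{m}=(1-z)^{-\ell-1}$ we get
\[
S_p=\sum_{\nu\ge 1}\binom{\nu+\ell-1}{\nu-1}z^{\nu}=z\sum_{m\ge 0}\binom{m+\ell}{m}z^{m}=\frac{z}{(1-z)^{\ell+1}},
\]
whence $D_{T_{\ell},k}=\sum_p (1-z)\cdot z(1-z)^{-\ell-1}=\sum_p p^{-k}(1-p^{-k})^{-\ell}$, as claimed; the values $\ell=0,1$ reproduce $D_{\omega,k}$ and $D_{\Omega,k}$, in accordance with $T_0=\omega$ and $T_1=\Omega$. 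There is no genuine obstacle in this argument — it is just the bookkeeping of two standard series identities weighted by the factor $1-p^{-k}$ — and the only place calling for a moment's care is the value $\ell=0$, where the Eulerian-polynomial identity does not literally apply and one reverts to the plain geometric series.
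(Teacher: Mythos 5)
Your proposal is correct and follows exactly the paper's (one-line) proof: apply Theorem \ref{Th_f_additive_gcd} to $\Omega_{\ell}, T_{\ell}\in{\cal F}_0$ and evaluate $D_{f,k}$ via the Eulerian-number identity \eqref{series_id} and the negative binomial identity \eqref{id_binom}. Your explicit handling of the degenerate case $\ell=0$ and the checks at $\ell=1$ are sound bookkeeping that the paper leaves implicit.
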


In what follows we obtain our estimates for the sums $L_{f,k}(x)$ involving the lcm of integers.

\begin{theorem} \label{Th_f_additive_lcm} Let $f$ be a function in class ${\cal F}_0$. Then for every $k\in \N$, $k\ge 2$,
\begin{equation*}
\sum_{n_1,\ldots, n_k\le x} f([n_1,\ldots,n_k]) = k x^k \log \log x + E_{f,k} x^k + kx^k  \sum_{j=1}^N \frac{a_j}{(\log x)^j} +
O\left(\frac{x^k}{(\log x)^{N+1}}\right),
\end{equation*}
for every fixed $N\ge 1$, where the constant $E_{f,k}$ is given by
\begin{equation*}
E_{f,k} = k C_f - \sum_{j=2}^k (-1)^j \binom{k}{j} D_{f,j},
\end{equation*}
and the constants $a_j$  \textup{($1\le j\le N$)} are as in \eqref{const_a_j}.
\end{theorem}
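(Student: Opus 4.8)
The plan is to expand $f([n_1,\ldots,n_k])$ by means of the inclusion–exclusion identity of Proposition \ref{Prop_key} and then to sum term by term, thereby reducing everything to the one-variable estimate of Theorem \ref{Th_f_additive} and the multiple gcd sums of Theorem \ref{Th_f_additive_gcd}. First I would sum the identity \eqref{id_additive_gen} over all $n_1,\ldots,n_k\le x$ and interchange the order of summation. For a fixed $j$ and a fixed index set $\{i_1<\cdots<i_j\}$ the value $f((n_{i_1},\ldots,n_{i_j}))$ depends only on the $j$ variables indexed by that set, so the $k-j$ remaining variables run freely over $\{1,\ldots,\lfloor x\rfloor\}$ and, after relabeling, the $j$ chosen variables contribute $G_{f,j}(x)$. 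Since there are $\binom{k}{j}$ index sets of size $j$, this yields the clean identity
\begin{equation*}
L_{f,k}(x)=\sum_{j=1}^{k}(-1)^{j-1}\binom{k}{j}\lfloor x\rfloor^{k-j}\,G_{f,j}(x).
\end{equation*}

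Next I would substitute the known asymptotics: for $j=1$ one has $G_{f,1}(x)=\sum_{n\le x}f(n)$, given by Theorem \ref{Th_f_additive}; for $2\le j\le k$ one has $G_{f,j}(x)=D_{f,j}x^{j}+O(x^{j-1})$ when $j\ge 3$ and $G_{f,2}(x)=D_{f,2}x^{2}+O(x\log\log x)$, by Theorem \ref{Th_f_additive_gcd}. Writing $\lfloor x\rfloor^{k-j}=x^{k-j}+O(x^{k-j-1})$ for $j<k$ (and $\lfloor x\rfloor^{0}=1$ for $j=k$) and multiplying out, the $j=1$ term produces
\begin{equation*}
kx^{k}\log\log x+kC_{f}x^{k}+kx^{k}\sum_{j=1}^{N}\frac{a_{j}}{(\log x)^{j}}
\end{equation*}
up to an error, while each term with $j\ge 2$ produces $(-1)^{j-1}\binom{k}{j}D_{f,j}x^{k}$ up to an error. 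Collecting the coefficient of $x^{k}$ gives $kC_{f}+\sum_{j=2}^{k}(-1)^{j-1}\binom{k}{j}D_{f,j}$, and since $(-1)^{j-1}=-(-1)^{j}$ this equals $E_{f,k}=kC_{f}-\sum_{j=2}^{k}(-1)^{j}\binom{k}{j}D_{f,j}$, exactly the stated constant.

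The remaining work is the bookkeeping of error terms, which I expect to be the only mildly delicate point of the argument. The dominant error comes from the $j=1$ term, namely $k\lfloor x\rfloor^{k-1}\cdot O\bigl(x/(\log x)^{N+1}\bigr)=O\bigl(x^{k}/(\log x)^{N+1}\bigr)$, which is precisely the shape claimed. Every other error contribution is a power of $x$ strictly smaller than $x^{k}$, possibly multiplied by $\log\log x$: these arise from the floor-function truncations $\lfloor x\rfloor^{k-j}=x^{k-j}+O(x^{k-j-1})$, from the $O(x^{j-1})$ errors of the gcd sums with $j\ge 3$, and from the $O(x\log\log x)$ error of $G_{f,2}$ (which contributes $O(x^{k-1}\log\log x)$). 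For each fixed $N$ one has $x^{k-1}(\log x)^{N+1}\log\log x=o(x^{k})$, so all of these are $o\bigl(x^{k}/(\log x)^{N+1}\bigr)$ and are absorbed into the error term. Assembling the main terms and the errors then completes the proof.
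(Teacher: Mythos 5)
Your proposal is correct and follows essentially the same route as the paper: apply Proposition \ref{Prop_key}, use symmetry to reduce to $\binom{k}{j}$ copies of $G_{f,j}(x)\lfloor x\rfloor^{k-j}$, and then insert Theorems \ref{Th_f_additive} and \ref{Th_f_additive_gcd}, with the $j=1$ term supplying the main terms and all other contributions absorbed into $O\bigl(x^{k}/(\log x)^{N+1}\bigr)$. The error bookkeeping you carry out is exactly what the paper's proof does (if anything, slightly more explicitly).
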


\begin{cor} The estimate of Theorem \ref{Th_f_additive_lcm} holds for the functions $\Omega_{\ell}(n)$ and $T_{\ell}(n)$ with $\ell \in \N_0$. In particular, it holds for the functions $\omega(n)$ and $\Omega(n)$ with the constants
\begin{equation*}
E_{\omega,k} = k\gamma +\sum_p \left( k\log \left(1 -\frac1{p}\right)+ 1- \left(1-\frac1{p}\right)^k \right).
\end{equation*}
and 
\begin{equation*}
E_{\Omega,k} = k\gamma +\sum_p \left( k\log \left(1 -\frac1{p}\right)+ \frac{k}{p-1} - \sum_{j=2}^k (-1)^j \binom{k}{j} \frac1{p^j-1} \right).
\end{equation*}
\end{cor}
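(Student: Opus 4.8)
The plan is to deduce everything from Theorem \ref{Th_f_additive_lcm} by specialization, so the only real content is making the constant $E_{f,k}=kC_f-\sum_{j=2}^k(-1)^j\binom{k}{j}D_{f,j}$ explicit. As already noted in Section \ref{Sect_F_0}, both $\Omega_{\ell}$ and $T_{\ell}$ belong to the class ${\cal F}_0$ (with parameter $\ell\in\N_0$), so the asymptotic formula of Theorem \ref{Th_f_additive_lcm} applies to them word for word, with $C_f=C_{\Omega_\ell}$ resp. $C_{T_\ell}$ from Corollaries \ref{Cor_Omega_ell} and \ref{Cor_T_ell} and $D_{f,j}=D_{\Omega_\ell,j}$ resp. $D_{T_\ell,j}$ from Corollary \ref{Cor_Omega_ell_gcd}. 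This already proves the first sentence of the statement; the second sentence is the case $\ell=0$ (which is $\omega$) and $\ell=1$ (which is $\Omega$), where the constants simplify and should be written out.

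First I would record the inputs. For $f=\omega$ one has $f(p^\nu)=1$, hence $\sum_{\nu\ge 1}f(p^\nu)/p^\nu=1/(p-1)$ and $(1-1/p)\cdot 1/(p-1)=1/p$, so $C_\omega=M=\gamma+\sum_p(\log(1-1/p)+1/p)$; and $D_{\omega,j}=\sum_p p^{-j}$ by Corollary \ref{Cor_Omega_ell_gcd}. For $f=\Omega$ one has $f(p^\nu)=\nu$, hence using $\sum_{\nu\ge 1}\nu x^\nu=x(1-x)^{-2}$ at $x=1/p$ one gets $(1-1/p)\cdot(1/p)(1-1/p)^{-2}=1/(p-1)$, so $C_\Omega=\gamma+\sum_p(\log(1-1/p)+1/(p-1))$; and the same identity at $x=p^{-j}$ gives $D_{\Omega,j}=\sum_p 1/(p^j-1)$, as in Corollary \ref{Cor_Omega_ell_gcd}.

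Next I would substitute into $E_{f,k}=kC_f-\sum_{j=2}^k(-1)^j\binom{k}{j}D_{f,j}$ and interchange the finite sum over $j$ with the sum over primes, which is legitimate by absolute convergence ($\binom{k}{j}p^{-j}$ and $\binom{k}{j}(p^j-1)^{-1}\le 2\binom{k}{j}p^{-j}$ sum over $p$). For $\omega$, completing the inner binomial sum with the $j=0,1$ terms and using $\sum_{j=0}^k(-1)^j\binom{k}{j}p^{-j}=(1-1/p)^k$ yields $\sum_{j=2}^k(-1)^j\binom{k}{j}p^{-j}=(1-1/p)^k-1+k/p$; inserting this together with $kC_\omega=k\gamma+\sum_p(k\log(1-1/p)+k/p)$ makes the $k/p$ terms cancel and leaves exactly $E_{\omega,k}=k\gamma+\sum_p\bigl(k\log(1-1/p)+1-(1-1/p)^k\bigr)$. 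For $\Omega$ there is no telescoping of the inner binomial sum (its terms are $1/(p^j-1)$, not $p^{-j}$), so one simply keeps it and arrives at $E_{\Omega,k}=k\gamma+\sum_p\bigl(k\log(1-1/p)+\tfrac{k}{p-1}-\sum_{j=2}^k(-1)^j\binom{k}{j}\tfrac1{p^j-1}\bigr)$, which is the asserted form.

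I do not expect any serious obstacle here: the corollary is a direct specialization of Theorem \ref{Th_f_additive_lcm} combined with elementary manipulations of geometric and binomial series. The only points that need a line of justification are the evaluations of $\sum_{\nu\ge 1}\nu x^\nu$ at $x=1/p$ and $x=p^{-j}$ (giving $C_\Omega$ and $D_{\Omega,j}$), the binomial identity used in the $\omega$-case, and the routine interchange of the finite $j$-sum with the sum over primes.
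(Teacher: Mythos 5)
Your proposal is correct and follows exactly the route the paper intends (the paper gives no separate proof, treating the corollary as an immediate specialization of Theorem \ref{Th_f_additive_lcm} together with Corollaries \ref{Cor_Omega_ell}, \ref{Cor_T_ell} and \ref{Cor_Omega_ell_gcd}). Your explicit evaluations $C_\omega=M$, $D_{\omega,j}=\sum_p p^{-j}$, $C_\Omega=\gamma+\sum_p(\log(1-1/p)+1/(p-1))$, $D_{\Omega,j}=\sum_p 1/(p^j-1)$, and the binomial telescoping in the $\omega$-case all check out.
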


Finally, we remark that it is also possible to apply our results to functions $f(n)= c \log g(n)$, where $g(n)$ are certain multiplicative functions and $c$ is 
a constant. For example, let $g(n)=\tau(n)$. Then the function $f(n)=\frac{\log \tau(n)}{\log 2}$ is in the class ${\cal F}_0$, and our results can 
be applied. We obtain from Theorem \ref{Th_f_additive_lcm} the following asymptotic formula.

\begin{cor} \label{Cor_tau} If $k\in \N$, then
\begin{equation*}
\sum_{n_1,\ldots, n_k\le x} \log \tau([n_1,\ldots,n_k]) = k (\log 2) x^k \log \log x + A_k x^k
+ k(\log 2)  \sum_{j=1}^N \frac{a_j x^k}{(\log x)^j} +O\left(\frac{x^k}{(\log x)^{N+1}}\right),
\end{equation*}
for every fixed $N\ge 1$, where the constant $A_k$ is given by
\begin{equation*}
A_k = k C - \sum_{j=2}^k (-1)^j \binom{k}{j} D_j, 
\end{equation*}
the sum being $0$ if $k=1$, with
\begin{equation*}
C = (\log 2) \gamma +\sum_p \left( (\log 2) \log \left(1 -\frac1{p}\right) + \left(1-\frac1{p}\right) \sum_{\nu=1}^{\infty} \frac{\log (\nu+1)}{p^{\nu}} \right)
\end{equation*}
and 
\begin{equation*}
D_j =\sum_p  \left(1-\frac1{p^j}\right) \sum_{\nu=1}^{\infty} \frac{\log (\nu+1)}{p^{\nu j}} \quad (2\le j\le k). 
\end{equation*}
\end{cor}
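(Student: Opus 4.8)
The plan is to recognize that the normalized function $f(n) = \frac{\log \tau(n)}{\log 2}$ lies in the class ${\cal F}_0$ and then to read off the statement as a direct specialization of Theorem~\ref{Th_f_additive_lcm}. First I would check the defining properties of ${\cal F}_0$: since $\tau$ is multiplicative, $\log \tau$ is additive, hence so is $f$; for a prime $p$ we have $\tau(p) = 2$, so $f(p) = 1$; and for $\nu \ge 2$ we have $f(p^{\nu}) = \frac{\log(\nu+1)}{\log 2} \ll \nu$, so the growth condition $f(p^{\nu}) \ll \nu^{\ell}$ holds with $\ell = 1$. Thus $f \in {\cal F}_0$, and in particular the tail series $\sum_{\nu \ge 1} f(p^{\nu}) p^{-\nu}$ and $\sum_{\nu \ge 1} f(p^{\nu}) p^{-\nu j}$ converge, so the constants $C_f$ and $D_{f,j}$ of Theorems~\ref{Th_f_additive} and~\ref{Th_f_additive_gcd} are well defined for this $f$.

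Next, for $k \ge 2$ I would apply Theorem~\ref{Th_f_additive_lcm} to $f$ and multiply the resulting asymptotic formula by $\log 2$, using the identity $\log \tau([n_1,\ldots,n_k]) = (\log 2)\, f([n_1,\ldots,n_k])$. The main term $k x^k \log \log x$ becomes $k(\log 2) x^k \log \log x$, the secondary terms $k x^k \sum_{j=1}^N a_j (\log x)^{-j}$ become $k(\log 2) x^k \sum_{j=1}^N a_j (\log x)^{-j}$, the remainder $O(x^k/(\log x)^{N+1})$ is unchanged up to the constant, and the constant term $E_{f,k} x^k$ becomes $A_k x^k$ with $A_k = (\log 2)\, E_{f,k}$. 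Since $E_{f,k} = k C_f - \sum_{j=2}^k (-1)^j \binom{k}{j} D_{f,j}$, distributing $\log 2$ gives $A_k = k C - \sum_{j=2}^k (-1)^j \binom{k}{j} D_j$ with $C = (\log 2) C_f$ and $D_j = (\log 2) D_{f,j}$. It then remains only to simplify these two constants: substituting $f(p^{\nu}) = \frac{\log(\nu+1)}{\log 2}$ into the formulas for $C_f$ and $D_{f,j}$ and absorbing the factor $\log 2$ produces exactly the stated expressions for $C$ and $D_j$, the factor $\log 2$ surviving only in front of $\gamma$ and of $\log(1 - 1/p)$ — the contributions coming from $f(p) = 1$ rather than from the tail sums, where the $\log 2$ cancels.

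For the boundary case $k = 1$ the sum $\sum_{j=2}^{k}$ is empty, so $A_1 = C$, and the asserted formula is just $\log 2$ times the estimate of Theorem~\ref{Th_f_additive} applied to $f$; this is consistent with the $k \ge 2$ formula, since $E_{f,1} = C_f$. I do not expect a genuine obstacle here: the only points needing care are the verification that $f \in {\cal F}_0$ (which rests on the elementary bound $\log(\nu+1) \ll \nu$, ensuring also absolute convergence of all the prime sums involved) and the routine bookkeeping that matches the distributed factor $\log 2$ against the constants $C$, $D_j$, $A_k$ as written. Everything else is an immediate consequence of the already-established theorems.
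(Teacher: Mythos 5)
Your proposal is correct and matches the paper's own (very brief) argument: the paper likewise observes that $f(n)=\frac{\log\tau(n)}{\log 2}$ belongs to ${\cal F}_0$ and reads the corollary off from Theorem \ref{Th_f_additive_lcm} (Theorem \ref{Th_f_additive} for $k=1$), with the same bookkeeping of the factor $\log 2$ in the constants $C$, $D_j$, $A_k$.
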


In the special case $k=1$, the result of Corollary \ref{Cor_tau} has been obtained by Hassani \cite[Th.\ 1.1]{Has2016}, and a weaker asymptotic formula is given in 
\cite[Problem 6.12]{DeKLuc2012}.

\subsection{The class ${\cal F}_1$ of Alladi-Erd\H{o}s-type functions} \label{Sect_F_1}

Let ${\cal F}_1$ denote the class of additive functions $f:\N \to \C$ such that $f(p)=p$ for every prime $p$ and $f(p^{\nu}) \ll \nu^{\ell}p^\nu$ holds uniformly 
for the primes $p$ and $\nu \ge 2$, where $\ell \in \N_0$ is some integer. For example, the functions $A_{\ell}(n)= \sum_{p^\nu \mid \mid n} \nu^{\ell} p$ and 
$\widetilde{A}_{\ell}(n)= \sum_{p^\nu \mid \mid n} \nu^{\ell} p^\nu$ with  
$\ell \in \N_0$ are in ${\cal F}_1$. In particular, $A_1(n)=A(n):= \sum_{p^\nu \mid \mid n} \nu p$  and $A_0(n)=A^*(n): = \sum_{p \mid n} p$ are 
the Alladi-Erd\H{o}s functions.  

It is known that   
\begin{equation} \label{est_Alladi_Erdos}
\sum_{n\le x} A(n)= \sum_{\substack{p^\nu \le x\\ \nu \ge 1}} p \left\lfloor \frac{x}{p^\nu}\right\rfloor = \frac{\pi^2 x^2}{12 \log x} + O\left(\frac{x^2}{(\log x)^2}\right),    
\end{equation}
which can be proved by using a strong form of the prime number theorem. See \cite{AllErd1977}, \cite[p.\ 62, 467]{Ten2015}. Also see \cite[Th.\ 2]{Woo2007} for 
a simple approach leading to a slightly weaker error term. The same formula \eqref{est_Alladi_Erdos} holds for $\sum_{n\le x} A^*(n)= \sum_{p\le x} p \left\lfloor \frac{x}{p} \right\rfloor$. 

We point out the following result.

\begin{theorem} \label{Th_f_additive_1} Let $f$ be a function in class ${\cal F}_1$. Then
\begin{equation*}
\sum_{n\le x} f(n) =  \frac{\pi^2 x^2}{12 \log x} + O\left(\frac{x^2}{(\log x)^2}\right).
\end{equation*}
\end{theorem}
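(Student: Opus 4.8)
The plan is to exploit the proximity of $f\in{\cal F}_1$ to the Alladi--Erd\H{o}s function $A^*(n)=\sum_{p\mid n} p$, for which \eqref{est_Alladi_Erdos} already gives the desired asymptotic formula, and to show that the difference $f(n)-A^*(n)$ contributes only to the error term. Write $f(n)=A^*(n)+h(n)$ where $h(n):=f(n)-A^*(n)$. Since both $f$ and $A^*$ are additive with $f(p)=A^*(p)=p$ for every prime $p$, the function $h$ is additive with $h(p)=0$, so $h(n)=\sum_{p^\nu\mid\mid n,\ \nu\ge 2} \bigl(f(p^\nu)-p\bigr)$. Using the defining bound $f(p^\nu)\ll \nu^\ell p^\nu$ of the class ${\cal F}_1$ (and trivially $p\le \nu^\ell p^\nu$ for $\nu\ge 2$), we get $f(p^\nu)-p \ll \nu^\ell p^\nu$ uniformly for primes $p$ and $\nu\ge 2$.

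The next step is to bound $\sum_{n\le x} h(n)$. Interchanging the order of summation,
\begin{equation*}
\sum_{n\le x} h(n) = \sum_{n\le x}\ \sum_{\substack{p^\nu\mid\mid n\\ \nu\ge 2}} \bigl(f(p^\nu)-p\bigr)
\ \le\ \sum_{\substack{p^\nu\le x\\ \nu\ge 2}} \bigl|f(p^\nu)-p\bigr| \left\lfloor \frac{x}{p^\nu}\right\rfloor
\ \ll\ x\sum_{\substack{p^\nu\le x\\ \nu\ge 2}} \frac{\nu^\ell p^\nu}{p^\nu}
\ =\ x\sum_{\substack{p^\nu\le x\\ \nu\ge 2}} \nu^\ell .
\end{equation*}
For fixed $\nu\ge 2$ the number of primes $p$ with $p^\nu\le x$ is $\ll x^{1/\nu}/\log x \ll x^{1/2}$, and the sum over $\nu$ terminates at $\nu\le \log_2 x$, so $\sum_{p^\nu\le x,\ \nu\ge 2}\nu^\ell \ll x^{1/2}(\log x)^{\ell+1}$. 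Hence $\sum_{n\le x} h(n) \ll x^{3/2}(\log x)^{\ell+1}$, which is absorbed into the error term $O\bigl(x^2/(\log x)^2\bigr)$. Combining this with \eqref{est_Alladi_Erdos} applied to $\sum_{n\le x}A^*(n)$ (the remark after \eqref{est_Alladi_Erdos} states the same formula holds for $A^*$) yields
\begin{equation*}
\sum_{n\le x} f(n) = \sum_{n\le x} A^*(n) + \sum_{n\le x} h(n) = \frac{\pi^2 x^2}{12\log x} + O\!\left(\frac{x^2}{(\log x)^2}\right),
\end{equation*}
as claimed.

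I do not expect any genuine obstacle here: the argument is a routine "truncation at prime powers" estimate once \eqref{est_Alladi_Erdos} is granted. The only point requiring a little care is making the bound on $\sum_{p^\nu\le x,\ \nu\ge 2}\nu^\ell$ uniform in $\nu$ and summing it against $x/p^\nu$ so that the total is comfortably smaller than $x^2/(\log x)^2$; a crude bound such as $x^{3/2}(\log x)^{\ell+1}=o\bigl(x^2/(\log x)^2\bigr)$ suffices, so no sharp input beyond \eqref{est_Alladi_Erdos} is needed. (If one prefers, the same computation can be phrased directly via $\sum_{n\le x}f(n)=\sum_{p^\nu\le x} f(p^\nu)\lfloor x/p^\nu\rfloor$, splitting off the $\nu=1$ term, which reproduces exactly $\sum_{p\le x}p\lfloor x/p\rfloor$, and bounding the $\nu\ge 2$ tail as above.)
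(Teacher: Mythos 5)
Your proof is correct and follows essentially the same route as the paper: both split off the $\nu=1$ contribution $\sum_{p\le x}p\lfloor x/p\rfloor$, handle it by \eqref{est_Alladi_Erdos}, and bound the $\nu\ge 2$ prime-power contribution by $x\sum_{p^\nu\le x,\,\nu\ge 2}\nu^\ell\ll x^{3/2}(\log x)^{\ell+1}$, which is absorbed into the error term. (Only the identity in your closing parenthetical should read $\sum_{p^\nu\le x}\bigl(f(p^\nu)-f(p^{\nu-1})\bigr)\lfloor x/p^\nu\rfloor$, but your main argument does not rely on it.)
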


Next we deduce the corresponding estimates for the sums $G_{f,k}(x)$ with $k\ge 2$.

\begin{theorem} \label{Th_f_additive_gcd_1} Let $f$ be a function in class ${\cal F}_1$. Then
\begin{equation*}
\sum_{n_1,n_2\le x} f((n_1,n_2)) = x^2 \log \log x + \overline{D}_{f,2} x^2 +  O\left(\frac{x^2}{\log x}\right),
\end{equation*}
where 
\begin{equation*}
\overline{D}_{f,2} = \gamma + \sum_p   \left(  \log \left(1-\frac1{p}\right) + \left(1-\frac{1}{p^2}\right)  \sum_{\nu=1}^{\infty} \frac{f(p^{\nu})}{p^{2\nu}}\right),
\end{equation*}
and if $k\ge 3$, then
\begin{equation*}
\sum_{n_1,\ldots,n_k\le x} f((n_1,\ldots,n_k)) = \overline{D}_{f,k} x^k + \begin{cases} O(x^{k-1}), & \text{ if $k\ge 4$}, \\ O(x^2\log \log x), & \text{ if $k=3$},
\end{cases}
\end{equation*}
where $\overline{D}_{f,k} =D_{f,k}$ defined by \eqref{D_f_k}.
\end{theorem}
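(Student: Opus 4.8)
\textbf{Proof plan for Theorem \ref{Th_f_additive_gcd_1}.}

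The natural starting point is the general identity of Lemma \ref{Lemma_gcd}, which gives
$G_{f,k}(x)=\sum_{d\le x}(\mu*f)(d)\lfloor x/d\rfloor^k$. For $f\in{\cal F}_1$ the Dirichlet convolution $h:=\mu*f$ is no longer supported on a sparse set; rather, since $f(p)=p$ and $f(p^\nu)\ll\nu^\ell p^\nu$, one computes $h(p)=f(p)-f(1)\cdot\ldots$ carefully: $h(p)=f(p)-1=p-1$ and $h(p^\nu)=f(p^\nu)-f(p^{\nu-1})\ll\nu^\ell p^\nu$ for $\nu\ge 2$. Thus $h(d)$ can be as large as roughly $d$ in size, so the series $\sum_d h(d)/d^k$ converges only for $k\ge 2$, and the tail estimate must be done with care. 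The plan is to split $h=h_1+h_2$, where $h_1$ captures the "main" part coming from the prime values $h(p)=p-1$ (equivalently, to write $f=A^*+ (f-A^*)$, where $f-A^*\in{\cal F}_0$-like with $f(p)-A^*(p)=0$), reducing the problem to two pieces: the known estimate \eqref{est_Alladi_Erdos} type behaviour for the $A^*$ part, and a ${\cal F}_0$-type contribution handled by Theorem \ref{Th_f_additive_gcd} for the remainder. Actually, the cleanest route is to write, via Proposition-type manipulation, $f=A^*+g$ where $g$ is additive with $g(p)=0$ and $g(p^\nu)\ll\nu^\ell p^\nu$; then $g$ is close enough to $0$ that $\sum_{n\le x}g(n)\ll x$ and, more importantly, the gcd-sums for $g$ satisfy $\sum_{n_1,\dots,n_k\le x}g((n_1,\dots,n_k))=\overline{D}_{g,k}x^k+O(x^{k-1})$ for all $k\ge 2$ by a direct application of the $G_{f,k}$ identity, the key point being that $(\mu*g)(p)=0$ kills the largest term.

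For the $A^*$ contribution, $\sum_{n_1,\dots,n_k\le x}A^*((n_1,\dots,n_k))=\sum_{d\le x}(\mu*A^*)(d)\lfloor x/d\rfloor^k$, and since $(\mu*A^*)(p)=p-1$ and $(\mu*A^*)(p^\nu)=0$ for $\nu\ge 2$ (because $A^*(p^\nu)=p$ is constant in $\nu$), the convolution $\mu*A^*$ is supported on squarefree numbers with $(\mu*A^*)(d)=\sum_{p\mid d}(p-1)\prod_{\text{appropriately}}$ — more precisely one gets a multiplicative-in-structure but additive-looking function. Here I would follow the same strategy as in the proof of Theorem \ref{Th_f_additive_1} (the one-variable case): use the known asymptotic \eqref{est_Alladi_Erdos} for $\sum_{n\le x}A^*(n)$ together with partial summation / the hyperbola method against $\lfloor x/d\rfloor^k$. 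For $k\ge 3$ the sum $\sum_d(\mu*A^*)(d)/d^k$ converges absolutely (since $(\mu*A^*)(d)\ll d^{1+\varepsilon}$ on squarefrees and even better on average), giving the clean main term $\overline{D}_{A^*,k}x^k$ with error $O(x^{k-1})$ when $k\ge 4$ and $O(x^2\log\log x)$ when $k=3$, the latter worse error coming from the range $d$ near $x$ where one must insert the Alladi–Erdős estimate rather than trivial bounds. For $k=2$ the series $\sum_d(\mu*A^*)(d)/d^2$ just barely diverges like $\log\log$, which is exactly the source of the $x^2\log\log x$ main term: here one writes $\lfloor x/d\rfloor^2 = (x/d)^2 + O(x/d)$, and $\sum_{d\le x}(\mu*A^*)(d)/d^2$ is evaluated via \eqref{est_Alladi_Erdos}-style input and Mertens-type sums $\sum_{p\le x}1/p=\log\log x+M'+\cdots$, producing the $\log\log x$ term and the constant $\gamma+\sum_p(\log(1-1/p)+\cdots)$ appearing in $\overline{D}_{f,2}$.

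The main obstacle I expect is the $k=2$ case: one must extract the $x^2\log\log x$ term with an explicit constant and with error $O(x^2/\log x)$, which forces using a Mertens-type estimate with a full error term (not just $O(1)$) for $\sum_{p\le x}1/p$, and pairing it correctly with the $\lfloor x/d\rfloor^2=(x/d)^2+O(x/d)$ expansion so that the secondary terms contribute only $O(x^2/\log x)$. Assembling the constant $\overline{D}_{f,2}=\gamma+\sum_p(\log(1-1/p)+(1-1/p^2)\sum_{\nu\ge1}f(p^\nu)/p^{2\nu})$ in closed form will require recognizing that the prime-by-prime contribution rearranges into an Euler-type sum exactly as in the definition of the Mertens constant; this is the bookkeeping-heavy step. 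The other cases ($k\ge 3$) are comparatively routine: for $k\ge 4$ everything converges and the error is $O(x^{k-1})$ by trivial tail bounds on $\sum_{d>x}|(\mu*f)(d)|/d^k$ and $\sum_{d\le x}|(\mu*f)(d)|\lfloor x/d\rfloor^{k-1}\ll x^{k-1}$, while for $k=3$ the only subtlety is that the tail $\sum_{d>x}(\mu*A^*)(d)/d^3$ and the near-diagonal terms require \eqref{est_Alladi_Erdos} to keep the error at $O(x^2\log\log x)$ rather than something larger. Finally, adding the $A^*$ and $g$ contributions and checking that $\overline{D}_{g,k}+\overline{D}_{A^*,k}=\overline{D}_{f,k}=D_{f,k}$ for $k\ge 3$ (and the analogous identity for $k=2$) completes the proof.
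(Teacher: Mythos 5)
Your plan is essentially the paper's proof, but two of your setup claims are wrong and should be fixed before the argument goes through cleanly. First, for an additive function $f$ one has $f(1)=0$, so $(\mu*f)(p)=f(p)-f(1)=f(p)=p$, not $p-1$; carrying $p-1$ through the $k=2$ computation would corrupt the constant $\overline{D}_{f,2}$. Second, and more importantly, by Lemma \ref{Lemma_Mobius} the convolution $\mu*f$ of an additive function is supported on \emph{prime powers only} (indeed $(\mu*A^*)(pq)=A^*(pq)-A^*(p)-A^*(q)=0$), so your worry about $\mu*A^*$ living on all squarefree numbers with some product structure is unfounded: Lemma \ref{Lemma_gcd} immediately collapses to
\begin{equation*}
G_{f,k}(x)=\sum_{\substack{p^{\nu}\le x\\ \nu\ge 1}}\bigl(f(p^{\nu})-f(p^{\nu-1})\bigr)\left\lfloor\frac{x}{p^{\nu}}\right\rfloor^{k},
\end{equation*}
which is exactly Lemma \ref{Lemma_additiv_gcd} and is what the paper uses. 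Your decomposition $f=A^*+g$ with $g(p)=0$ is then just the split of this sum into the $\nu=1$ and $\nu\ge 2$ ranges, which is precisely how the paper proceeds: the $\nu=1$ part gives $x^k\sum_{p\le x}p^{1-k}+O\bigl(x^{k-1}\sum_{p\le x}p^{2-k}\bigr)$, handled by Mertens' theorem with error term $O(1/\log x)$ when $k=2$ and by convergent prime sums when $k\ge 3$ (this is also where the $O(x^2\log\log x)$ for $k=3$ comes from, via $\sum_{p\le x}1/p$ in the floor-function error — no input from the Alladi--Erd\H{o}s estimate \eqref{est_Alladi_Erdos} is needed for any $k\ge 2$, contrary to what you suggest); the $\nu\ge 2$ part contributes an absolutely convergent series for all $k\ge 2$ since $\sum_p\sum_{\nu\ge 2}\nu^{\ell}p^{\nu}/p^{\nu k}\ll\sum_p p^{-2(k-1)}<\infty$, with tails controlled exactly as in Lemma \ref{le:toth} and estimate \eqref{S_k}. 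With those two corrections your outline coincides with the paper's proof, so I would not call this a different route.
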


\begin{cor} The estimate of Theorem \ref{Th_f_additive_gcd_1} holds for the function $f(n)=A_{\ell}(n)$ \textup{($\ell \in \N$)} with the constants
\begin{equation*}
\overline{D}_{A_{\ell},2} = \gamma + \sum_p  \left( \log \left(1 -\frac1{p}\right)  +  \frac1{p} \left(1-\frac1{p^2}\right)^{-\ell} \ \sum_{\nu=0}^{\ell -1 } \frac{\langle {\ell \atop \nu} \rangle }{p^{2\nu}} \right),
\end{equation*}
\begin{equation*}
\overline{D}_{A_{\ell},k} = \sum_p \frac1{p^{k-1}} \left(1-\frac1{p^k}\right)^{-\ell} \ \sum_{\nu=0}^{\ell -1} 
\frac{\langle {\ell \atop \nu} \rangle }{p^{\nu k}}  \quad (k\ge 3),
\end{equation*}
the inner sums being considered $1$ if $\ell=0$.

In particular, it holds for the function $A(n)=A_1(n)$ with
\begin{equation*}
\overline{D}_{A,2} = \gamma + \sum_p  \left( \log \left(1 -\frac1{p}\right)  +  \frac{p}{p^2-1} \right) \approx 0.4971,
\end{equation*}
\begin{equation*}
\overline{D}_{A,k} = \sum_p \frac{p}{p^k-1} \quad (k\ge 3),
\end{equation*}
and it holds for the function $A^*(n)=A_0(n)$ with the constants $\overline{D}_{A^*,2}=M$ the Mertens constant, and 
$\overline{D}_{A^*,k} = \sum_p \frac1{p^{k-1}}$  \textup{($k\ge 3$)}.
\end{cor}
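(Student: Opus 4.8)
The plan is to apply Theorem~\ref{Th_f_additive_gcd_1} directly to $f=A_\ell$, so that the only substantive task is to evaluate the constants $\overline{D}_{A_\ell,2}$ and $\overline{D}_{A_\ell,k}=D_{A_\ell,k}$ ($k\ge 3$) in closed form. First one checks that $A_\ell\in{\cal F}_1$: since $A_\ell(p^\nu)=\nu^\ell p$ we have $A_\ell(p)=p$ and $A_\ell(p^\nu)=\nu^\ell p\ll \nu^\ell p^\nu$ uniformly in $p$ and $\nu\ge 2$, so $A_\ell$ lies in ${\cal F}_1$ with the given integer $\ell$. Thus Theorem~\ref{Th_f_additive_gcd_1} applies and it only remains to substitute $f(p^\nu)=\nu^\ell p$ into the defining formulas for $\overline{D}_{f,2}$ and $D_{f,k}$.

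The key remark is that $A_\ell(p^\nu)=p\,\Omega_\ell(p^\nu)$, so the local sums occurring here are exactly $p$ times those already computed for $\Omega_\ell$ in Corollary~\ref{Cor_Omega_ell_gcd}. Concretely, for an integer $m\ge 2$ I would use the Eulerian-number generating identity (the one recorded in Section~\ref{Sect_Eulerian_numbers} and already used for $\Omega_\ell$),
\[
\sum_{\nu\ge 1}\nu^\ell x^\nu=\frac{x}{(1-x)^{\ell+1}}\sum_{\nu=0}^{\ell-1}\langle {\ell\atop\nu}\rangle x^\nu\qquad(\ell\ge 1),
\]
with the right-hand side read as $x/(1-x)$ when $\ell=0$. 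Taking $x=p^{-m}$ gives
\[
\sum_{\nu=1}^\infty\frac{A_\ell(p^\nu)}{p^{m\nu}}=p\sum_{\nu=1}^\infty \nu^\ell p^{-m\nu}=\frac{p^{1-m}}{(1-p^{-m})^{\ell+1}}\sum_{\nu=0}^{\ell-1}\frac{\langle{\ell\atop\nu}\rangle}{p^{m\nu}},
\]
and multiplying through by $1-p^{-m}$ cancels one factor $(1-p^{-m})$, leaving $\frac{1}{p^{m-1}}(1-p^{-m})^{-\ell}\sum_{\nu=0}^{\ell-1}\langle{\ell\atop\nu}\rangle p^{-m\nu}$ (read as $p^{1-m}$ when $\ell=0$). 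Substituting $m=k$ into \eqref{D_f_k} yields exactly $\overline{D}_{A_\ell,k}$ for $k\ge 3$, and substituting $m=2$ into the definition of $\overline{D}_{f,2}$ gives the bracketed term there, whence the stated formula for $\overline{D}_{A_\ell,2}$. No convergence issue arises beyond what Theorem~\ref{Th_f_additive_gcd_1} already guarantees: for $k\ge 3$ the $\nu=1$ term is $O(p^{1-k})=O(p^{-2})$, while for $k=2$ the $\nu=1$ term $\sim p^{-1}$ is absorbed by the $\log(1-1/p)$ already present in the definition of $\overline{D}_{f,2}$.

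It then remains to read off the special cases. For $\ell=0$ one has $A_0=A^*$, the inner Eulerian sum is $1$, and the above gives $\overline{D}_{A^*,2}=\gamma+\sum_p(\log(1-1/p)+1/p)=M$ and $\overline{D}_{A^*,k}=\sum_p p^{1-k}=\sum_p 1/p^{k-1}$ for $k\ge 3$. For $\ell=1$ one has $A_1=A$ and $\langle{1\atop0}\rangle=1$, so $\frac1p(1-p^{-2})^{-1}=\frac{p}{p^2-1}$ and $\frac1{p^{k-1}}(1-p^{-k})^{-1}=\frac{p}{p^k-1}$, which give $\overline{D}_{A,2}$ and $\overline{D}_{A,k}$ as stated; the numerical value $\overline{D}_{A,2}\approx 0.4971$ is then a routine evaluation of the convergent prime series (after pairing $\log(1-1/p)$ with $p/(p^2-1)=1/p+O(p^{-3})$). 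I do not anticipate any genuine obstacle; the only points needing care are the bookkeeping of the powers of $1-p^{-m}$ in passing from the Eulerian identity to the closed form, and the correct reading of the $\ell=0$ degenerate case.
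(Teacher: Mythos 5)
Your proposal is correct and follows exactly the route the paper intends: the paper gives no separate proof of this corollary, but (as with Corollary \ref{Cor_Omega_ell_gcd}) it is meant to follow from Theorem \ref{Th_f_additive_gcd_1} together with the Eulerian-number identity \eqref{series_id}, which is precisely your substitution $f(p^{\nu})=\nu^{\ell}p$ with $x=p^{-m}$ and the cancellation of one factor $1-p^{-m}$. Your verification that $A_{\ell}\in{\cal F}_1$ and your reading of the $\ell=0$ and $\ell=1$ special cases are all accurate.
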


Now we obtain the estimate for the sums $L_{f,k}(x)$ involving the lcm of integers.

\begin{theorem} \label{Th_f_additive_lcm_1} Let $f$ be a function in class ${\cal F}_1$. Then for every $k\in \N$,
\begin{equation*}
\sum_{n_1,\ldots,n_k\le x} f([n_1,\ldots,n_k]) =  \frac{\pi^2 k x^{k+1}}{12 \log x} + O\left(\frac{x^{k+1}}{(\log x)^2}\right).
\end{equation*}
\end{theorem}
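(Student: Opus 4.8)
The plan is to start from the inclusion-exclusion identity of Proposition \ref{Prop_key}, which for an additive $f$ gives
\begin{equation*}
\sum_{n_1,\ldots,n_k\le x} f([n_1,\ldots,n_k]) = \sum_{1\le j\le k} (-1)^{j-1} \binom{k}{j} \sum_{m_1,\ldots,m_j\le x} f((m_1,\ldots,m_j)),
\end{equation*}
since for fixed $j$ the inner sum over $1\le i_1<\cdots<i_j\le k$ contributes $\binom{k}{j}$ identical terms (each a $j$-fold gcd-sum over the box $[1,x]^j$, after summing the remaining $k-j$ variables freely, which produces the factor $\lfloor x\rfloor^{k-j}$). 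So first I would record this reduction carefully, writing $G_{f,j}(x) := \sum_{m_1,\ldots,m_j\le x} f((m_1,\ldots,m_j))$ and noting that the $j$-th term above is exactly $(-1)^{j-1}\binom{k}{j}\lfloor x\rfloor^{k-j} G_{f,j}(x)$.

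Next I would insert the known asymptotics. For $j=1$, Theorem \ref{Th_f_additive_1} gives $G_{f,1}(x)=\sum_{n\le x} f(n) = \frac{\pi^2 x^2}{12\log x} + O\!\left(\frac{x^2}{(\log x)^2}\right)$, so the $j=1$ term contributes $k\lfloor x\rfloor^{k-1}\big(\frac{\pi^2 x^2}{12\log x} + O(\cdot)\big) = \frac{\pi^2 k x^{k+1}}{12\log x} + O\!\left(\frac{x^{k+1}}{(\log x)^2}\right)$, which is precisely the claimed main term. For $2\le j\le k$, I would use Theorem \ref{Th_f_additive_gcd_1}: when $j\ge 4$ it gives $G_{f,j}(x)=\overline{D}_{f,j}x^j+O(x^{j-1})$, when $j=3$ it gives $\overline{D}_{f,3}x^3+O(x^2\log\log x)$, and when $j=2$ it gives $x^2\log\log x + \overline{D}_{f,2}x^2 + O(x^2/\log x)$. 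In every case with $j\ge 2$, multiplying by $\lfloor x\rfloor^{k-j}$ produces a contribution of order at most $x^{k}\log\log x$ (the worst case being $j=2$), hence $o(x^{k+1}/(\log x)^2)$ — in fact $O(x^k\log\log x)$ — and is absorbed into the error term. Summing over $j$ then yields the stated formula.

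The step I expect to require the most care is not any single estimate but making the reduction via Proposition \ref{Prop_key} fully rigorous with the floor functions: one must check that summing a $j$-variable gcd-function over the full $k$-variable box factors cleanly as $\lfloor x\rfloor^{k-j}G_{f,j}(x)$, and that replacing $\lfloor x\rfloor^{k-j}$ by $x^{k-j}$ costs only $O(x^{k-1})$ times the relevant gcd-sum, which is comfortably within the error term since even the $j=1$ gcd-sum is $O(x^2)$. A minor point worth stating explicitly is that the secondary main terms $\binom{k}{2}x^{k-2}\cdot x^2\log\log x$ and the constant-order terms $\overline{D}_{f,j}x^{k}$ all lie in the error class $O(x^{k+1}/(\log x)^2)$; since $x^k\log\log x = o(x^{k+1}/(\log x)^2)$ fails — wait, in fact $x^k\log\log x$ is \emph{larger} than $x^{k+1}/(\log x)^2$ is false too: $x^{k+1}/(\log x)^2$ dominates $x^k\log\log x$ for large $x$, so these lower-order contributions are genuinely absorbed. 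Thus the only genuine main term is the one coming from $j=1$, and the proof concludes.
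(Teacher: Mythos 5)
Your proposal is correct and follows essentially the same route as the paper: reduce via Proposition \ref{Prop_key} and symmetry to the binomial-weighted gcd-sums, extract the main term $\frac{\pi^2 k x^{k+1}}{12\log x}$ from the $j=1$ term using Theorem \ref{Th_f_additive_1}, and absorb the $j\ge 2$ contributions (worst case $O(x^k\log\log x)$ from $j=2$, via Theorem \ref{Th_f_additive_gcd_1}) into $O\bigl(x^{k+1}/(\log x)^2\bigr)$. The only cosmetic issue is the self-correcting digression at the end about whether $x^k\log\log x$ is dominated by $x^{k+1}/(\log x)^2$; the final conclusion there is right and the argument stands.
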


\subsection{The function $B(n)$} \label{Sect_B}

In this section we consider the function $B(n)=A(n)-A^*(n)$, where $B(p)=0$ for every prime $p$. It would be possible to define and study here another class of additive functions $f$  with $f(p)=0$ for the primes $p$ and with adequate order conditions on $f(p^{\nu})$ with $\nu \ge 2$, but we confine ourselves to the function $B(n)$.  

Alladi and Erd\H{o}s \cite[Th.\ 1.5]{AllErd1977} proved that
\begin{equation*}
\sum_{n \le x} B(n) = x \log \log x + O(x).
\end{equation*}

We improve this estimate in the following way.

\begin{theorem} \label{th:B_1} We have
\begin{equation*}
\sum_{n \le x} B(n) = x \log \log \sqrt{x} + F x  + O \left( \frac{x}{\log x} \right),
\end{equation*}
where 
\begin{equation*}
F:= \gamma + \sum_p \left( \log \left( 1 - \frac{1}{p} \right) + \frac{1}{p-1} \right) \approx 1.0346.
\end{equation*}
\end{theorem}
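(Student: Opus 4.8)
The plan is to reduce $\sum_{n\le x}B(n)$ to a sum over prime powers and then read off the main term from Mertens' theorem. First I would rewrite $B$ prime by prime: since $B(n)=A(n)-A^*(n)=\sum_{p^{\nu}\mid\mid n}(\nu-1)p$, interchanging the order of summation gives
\begin{equation*}
\sum_{n\le x}B(n)=\sum_p p\sum_{\substack{n\le x\\ p\mid n}}\big(v_p(n)-1\big)=\sum_p p\sum_{\nu\ge 2}\left\lfloor\frac{x}{p^{\nu}}\right\rfloor=\sum_{\substack{p^{\nu}\le x\\ \nu\ge 2}}p\left\lfloor\frac{x}{p^{\nu}}\right\rfloor,
\end{equation*}
where $v_p(n)$ denotes the exponent of $p$ in $n$, and the middle step uses that, for $p\mid n$, the number $v_p(n)-1$ counts the exponents $\nu\ge 2$ with $p^{\nu}\mid n$. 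Equivalently, this isolates the $\nu\ge 2$ part of the prime‑power sum \eqref{est_Alladi_Erdos} for $A(n)$, which is precisely why no strong form of the prime number theorem is needed here.

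Next I would remove the floor functions via $\lfloor x/p^{\nu}\rfloor=x/p^{\nu}+O(1)$. The induced error is $O\big(\sum_{p^{\nu}\le x,\ \nu\ge 2}p\big)$: the term $\nu=2$ contributes $\sum_{p\le\sqrt x}p\ll x/\log x$ by Chebyshev's bound $\pi(y)\ll y/\log y$, while the $O(\log x)$ terms with $\nu\ge 3$ each contribute at most $\sum_{p\le x^{1/3}}p\ll x^{2/3}/\log x$, i.e.\ $O(x^{2/3})$ altogether. Hence
\begin{equation*}
\sum_{n\le x}B(n)=x\sum_{\substack{p^{\nu}\le x\\ \nu\ge 2}}\frac{1}{p^{\nu-1}}+O\left(\frac{x}{\log x}\right).
\end{equation*}

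For the remaining sum I would separate the dominant term $\nu=2$, for which Mertens' theorem gives $\sum_{p\le\sqrt x}1/p=\log\log\sqrt x+M+O(1/\log x)$ with $M$ the Mertens constant. For each $\nu\ge 3$ I would complete the inner sum to a sum over all primes, $\sum_{p\le x^{1/\nu}}p^{-(\nu-1)}=\sum_p p^{-(\nu-1)}+O\big(x^{-(\nu-2)/\nu}\big)$, and sum over $\nu$, using $\sum_{\nu\ge 3}\sum_p p^{-(\nu-1)}=\sum_p\sum_{m\ge 2}p^{-m}=\sum_p\frac{1}{p(p-1)}$ while the tail errors sum to $O(x^{-1/3})$. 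Combining everything yields
\begin{equation*}
\sum_{n\le x}B(n)=x\log\log\sqrt x+\left(M+\sum_p\frac{1}{p(p-1)}\right)x+O\left(\frac{x}{\log x}\right),
\end{equation*}
and the proof concludes by checking $M+\sum_p\frac{1}{p(p-1)}=F$, which is immediate from $M=\gamma+\sum_p\big(\log(1-1/p)+1/p\big)$ together with $\frac{1}{p(p-1)}=\frac{1}{p-1}-\frac{1}{p}$.

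I do not expect a genuinely hard step; the work is entirely in the error‑term bookkeeping. The key structural point is that $B$ only sees prime powers $p^{\nu}$ with $\nu\ge 2$, so that the dominant ($\nu=2$) piece is governed by Mertens' theorem rather than by the prime number theorem — in contrast with $\sum_{n\le x}A(n)$ in \eqref{est_Alladi_Erdos}; the rest amounts to verifying that the $\nu\ge 3$ contributions and the floor‑removal errors are all $O(x/\log x)$, and to observing that the harmless gap between $\log\log\sqrt x$ and $\log\log x$ (the constant $\log 2$) has already been absorbed into $F$ in the statement.
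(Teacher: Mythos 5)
Your proposal is correct and follows essentially the same route as the paper: both reduce the sum to $\sum_{p^{\nu}\le x,\ \nu\ge 2} p\lfloor x/p^{\nu}\rfloor$, bound the floor-removal error by $O(x/\log x)$ via Chebyshev, apply Mertens' theorem to the dominant $\nu=2$ piece, and absorb the $\nu\ge 3$ contribution into the convergent series $\sum_p \frac{1}{p(p-1)}$, giving $F=M+\sum_p\frac{1}{p(p-1)}$. The only cosmetic difference is that you derive the prime-power identity by direct interchange of summation and split by $\nu$, whereas the paper invokes its Lemma \ref{Lemma_additiv_gcd} and groups the whole $\nu$-sum into $\sum_{p\le\sqrt x}\frac{1}{p-1}$ before expanding.
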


For the sums involving the gcd and lcm we have the following results.

\begin{theorem} \label{Th_B_gcd}
Let $k \in \N$, $k\ge 2$ be fixed. Then
\begin{equation*}
\sum_{n_1,\ldots, n_k \le x} B \left( (n_1,\dotsc,n_k) \right)  = x^k \sum_p \frac{1}{p^{k-1}(p^k-1)}  +   
\begin{cases} O(x^{k-1}), & \text{ if $k\ge 3$}, \\ O(x\log \log x), & \text{ if $k=2$}.
\end{cases}         
\end{equation*}
\end{theorem}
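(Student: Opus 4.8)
The argument runs parallel to the proof of Theorem~\ref{Th_f_additive_gcd}. The plan is to invoke the identity of Lemma~\ref{Lemma_gcd} with $f=B$, namely
\begin{equation*}
\sum_{n_1,\ldots,n_k\le x}B((n_1,\ldots,n_k))=\sum_{d\le x}(\mu*B)(d)\left\lfloor\frac xd\right\rfloor^k,
\end{equation*}
and to compute $\mu*B$. Since $B$ is additive, $(\mu*B)(n)$ vanishes unless $n$ is a prime power: expanding $\sum_{d\mid n}\mu(d)B(n/d)$ over the squarefree divisors of $n$ and using $B(uv)=B(u)+B(v)$ for $(u,v)=1$ shows that the contribution of each prime cancels as soon as $n$ has two distinct prime factors, while $(\mu*B)(p^a)=B(p^a)-B(p^{a-1})$. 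From $B(p^\nu)=\nu p-p=(\nu-1)p$ we get $(\mu*B)(p)=0$ and $(\mu*B)(p^a)=p$ for $a\ge2$, so the sum collapses to
\begin{equation*}
\sum_{n_1,\ldots,n_k\le x}B((n_1,\ldots,n_k))=\sum_{\substack{p^a\le x\\ a\ge2}}p\left\lfloor\frac x{p^a}\right\rfloor^k.
\end{equation*}

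Next I would replace $\lfloor x/p^a\rfloor^k$ by $(x/p^a)^k+O((x/p^a)^{k-1})$ (legitimate since $p^a\le x$ forces $x/p^a\ge1$), splitting off the main term
\begin{equation*}
x^k\sum_{\substack{p^a\le x\\ a\ge2}}\frac p{p^{ak}}=x^k\sum_p p\sum_{a\ge2}\frac1{p^{ak}}+O(x)=x^k\sum_p\frac{p^{1-2k}}{1-p^{-k}}+O(x)=x^k\sum_p\frac1{p^{k-1}(p^k-1)}+O(x),
\end{equation*}
where completing the prime-power series costs $x^k\sum_{p^a>x,\,a\ge2}p^{1-ak}=O(x)$, after distinguishing the ranges $p>\sqrt{x}$ and $p\le\sqrt{x}$ (in the second range $p^a>x$ forces $(p^a)^{-k}<x^{-k}$) and using $\sum_{p\le y}p\ll y^2$. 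This produces the announced leading term, and the $O(x)$ is absorbed into the error terms of the statement.

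It then remains to control the secondary term $x^{k-1}\sum_{p^a\le x,\,a\ge2}p^{1-a(k-1)}$. When $k\ge3$ the exponent $a(k-1)-1$ is $\ge3$ for every $a\ge2$, so the series over all prime powers converges and the contribution is $O(x^{k-1})$, as required. When $k=2$ the exponent is $a-1$, so the sum equals $\sum_{p\le\sqrt{x}}p^{-1}+\sum_{a\ge3}\sum_p p^{1-a}$; the tail over $a\ge3$ converges, while Mertens' theorem gives $\sum_{p\le\sqrt{x}}p^{-1}=\log\log\sqrt{x}+O(1)=\log\log x+O(1)$, whence the contribution is $O(x\log\log x)$. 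Collecting the pieces yields the claimed formula. The only delicate point is the case $k=2$, where the relevant Dirichlet series is exactly divergent and Mertens' estimate is needed to extract the $\log\log x$ factor; everything else reduces to routine tail bounds for sums over prime powers.
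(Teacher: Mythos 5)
Your proposal is correct and follows essentially the same route as the paper: both reduce the sum to $\sum_{p^a\le x,\,a\ge 2}\,p\,\lfloor x/p^a\rfloor^k$ via the additivity of $B$ (the paper quotes Lemma~\ref{Lemma_additiv_gcd}, you rederive the needed Möbius computation), expand the floor, complete the prime-power series for the main term, and bound the secondary term with the convergent-series argument for $k\ge 3$ and Mertens' estimate for $k=2$. The tail bounds you sketch for the completion (splitting at $p=\sqrt x$) match the paper's, so nothing is missing.
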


\begin{theorem} \label{Th_B_lcm}
Let $k \in \N$, $k\ge 2$ be fixed. Then
\begin{equation*}
\sum_{n_1,\dots,n_k \le  x} B\left( \left[ n_1,\dotsc,n_k \right]  \right)  = k x^k \log \log \sqrt{x} + H_k x^k  + O \left( \frac{x^k}{\log x} \right)
\end{equation*}
where 
\begin{equation*}
H_k := k F + \sum_{j=2}^k (-1)^{j-1} {k \choose j} \sum_p \frac{1}{p^{j-1}(p^j-1)}
\end{equation*}
and $F$ is the constant defined in Theorem~{\rm~\ref{th:B_1}}. For instance
\begin{small}
\begin{center}
\begin{tabular}{cccccccccc}
$k$ & $2$ & $3$  & $4$ & $5$ & $6$ & $7$ & $8$ & $9$ & $10$ \\
$H_k$ & $1.816$ & $2.435$ & $2.907$ & $3.255$ & $3.5004$ & $3.658$ & $3.74$ & $3.758$ & $3.719$
\end{tabular}
\end{center}
\end{small}
\end{theorem}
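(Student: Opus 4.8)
The plan is to apply the inclusion-exclusion identity of Proposition \ref{Prop_key} to the additive function $B$, reducing the lcm-sum to a linear combination of gcd-sums, and then to substitute the known asymptotics. First I would write, using \eqref{id_additive_gen},
\begin{equation*}
\sum_{n_1,\dots,n_k\le x} B\bigl([n_1,\dots,n_k]\bigr) = \sum_{1\le j\le k}(-1)^{j-1}\binom{k}{j}\sum_{n_1,\dots,n_j\le x} B\bigl((n_1,\dots,n_j)\bigr),
\end{equation*}
the binomial factor arising because the inner sum over $n_{i_1},\dots,n_{i_j}$ depends only on $j$ (the remaining $k-j$ variables each contribute a factor $\lfloor x\rfloor = x+O(1)$, which I must track carefully since it multiplies a term of size up to $x^{2}\log\log x$; absorbing these lower-order corrections into $O(x^k/\log x)$ is routine but needs a line). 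For $j=1$ the inner sum is $\sum_{n\le x}B(n)$ times $\lfloor x\rfloor^{k-1}$, handled by Theorem \ref{th:B_1}; for $2\le j\le k$ the inner sums are handled by Theorem \ref{Th_B_gcd}.

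Next I would assemble the main terms. From Theorem \ref{th:B_1} the $j=1$ contribution is $k\bigl(x^k\log\log\sqrt{x} + Fx^k\bigr) + O(x^k/\log x)$, where the factor $k=\binom{k}{1}$ and the extra power $x^{k-1}=\lfloor x\rfloor^{k-1}+O(x^{k-2})$ combine; the error $O(x/\log x)$ from Theorem \ref{th:B_1} becomes $O(x^k/\log x)$ after multiplication. From Theorem \ref{Th_B_gcd}, the term with index $j$ contributes $(-1)^{j-1}\binom{k}{j}\,x^k\sum_p \frac{1}{p^{j-1}(p^j-1)}$, up to an error which is $O(x^{k-1})$ when $j\ge 3$ and $O(x^{k-2}\log\log x)$ coming from the $j=2$ term (after accounting for the $\lfloor x\rfloor^{k-j}$ factors, which here multiply $x^j$ and so give genuine $O(x^{k-1})$ and $O(x^{k-1}\log\log x)$ corrections respectively — all comfortably inside $O(x^k/\log x)$). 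Collecting the constant terms gives exactly
\begin{equation*}
H_k = kF + \sum_{j=2}^k (-1)^{j-1}\binom{k}{j}\sum_p \frac{1}{p^{j-1}(p^j-1)},
\end{equation*}
which is the claimed value; the numerical table follows by evaluating the convergent prime sums (note $\frac{1}{p^{j-1}(p^j-1)}\ll p^{-(2j-1)}$, so each series converges rapidly).

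The main obstacle is bookkeeping rather than ideas: one must confirm that every error term, once multiplied by the floor-power factors $\lfloor x\rfloor^{k-j}$ and summed over $j$, stays within $O(x^k/\log x)$. The tightest case is $j=1$: Theorem \ref{th:B_1} only gives error $O(x/\log x)$, and $\lfloor x\rfloor^{k-1} = x^{k-1} + O(x^{k-2})$, so the cross-terms are $O(x^{k-1}\log\log x)$, which is $o(x^k/\log x)$ — fine, but it should be stated. The $j=2$ gcd error $O(x^2\log\log x)$ times $\lfloor x\rfloor^{k-2}$ gives $O(x^k\log\log x / x)$... no: it gives $O(x^{k}\log\log x)$? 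One must be careful — it is $O(x^2\log\log x)\cdot x^{k-2} = O(x^k\log\log x)$, which is \emph{not} inside $O(x^k/\log x)$. Here I would instead invoke the $k=2$ case of Theorem \ref{Th_f_additive_gcd_1}/\ref{Th_B_gcd} more sharply, or rather observe that in Theorem \ref{Th_B_gcd} the error for $k=2$ is $O(x\log\log x)$, so the $j=2$ inner sum is $x^2\sum_p\frac{1}{p(p^2-1)} + O(x\log\log x)$, and multiplying by $\lfloor x\rfloor^{k-2}=x^{k-2}+O(x^{k-3})$ yields main term $x^k\sum_p\frac{1}{p(p^2-1)}$ with error $O(x^{k-1}\log\log x) = O(x^k/\log x)$. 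So the correct reading is that one uses the \emph{$k=2$ gcd estimate} for the inner $j=2$ sum, not the $k\ge 3$ one; with that, all errors are dominated by $O(x^k/\log x)$ and the proof closes.
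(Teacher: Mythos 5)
Your proposal is correct and follows essentially the same route as the paper, which proves Theorem \ref{Th_B_lcm} exactly by combining Proposition \ref{Prop_key} with Theorems \ref{th:B_1} and \ref{Th_B_gcd} (the paper omits the details). Your bookkeeping is right once you settle on the correct reading of Theorem \ref{Th_B_gcd} for $j=2$, namely that its error term is $O(x\log\log x)$, so every error contribution is indeed $O(x^{k-1}\log\log x)$ or better and hence absorbed into $O(x^k/\log x)$.
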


\section{Preliminaries to the proofs} \label{Sect_Prelim}

First we give the proof of the next identity, already mentioned in Section \ref{Sect_Motivation}.

\begin{lemma} \label{Lemma_gcd} Let $f$ be an arithmetic function and let $k\in \N$. Then  
\begin{equation*}
G_{f,k}(x):= \sum_{n_1,\ldots,n_k\le x} f((n_1,\ldots,n_k))  =  \sum_{d\le x} (\mu*f)(d) \lfloor x/d \rfloor^k.
\end{equation*}
\end{lemma}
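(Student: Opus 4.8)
The plan is to prove the identity by a standard Dirichlet-convolution rearrangement, exploiting the fact that the gcd condition is captured by a divisibility count. First I would write $f = \mu * (\mu * f) = \1 * (\mu * f)$, i.e. $f(n) = \sum_{d \mid n} (\mu*f)(d)$, which is just M\"obius inversion applied to the arithmetic function $f$; no hypotheses on $f$ are needed. Substituting this into the definition of $G_{f,k}(x)$ gives
\begin{equation*}
G_{f,k}(x) = \sum_{n_1,\ldots,n_k \le x} \ \sum_{d \mid (n_1,\ldots,n_k)} (\mu*f)(d).
\end{equation*}

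The next step is to interchange the order of summation. Since $d \mid (n_1,\ldots,n_k)$ is equivalent to $d \mid n_i$ for every $i = 1,\ldots,k$, and each $n_i$ ranges over $\{1,\ldots,\lfloor x \rfloor\}$, the number of $k$-tuples $(n_1,\ldots,n_k)$ with all $n_i \le x$ and all divisible by $d$ is exactly $\lfloor x/d \rfloor^k$ (the multiples of $d$ up to $x$ being $d, 2d, \ldots, \lfloor x/d \rfloor d$, independently in each coordinate). Moreover the outer index $d$ runs over $d \le x$, because for $d > x$ there are no multiples of $d$ in the range. Collecting terms yields
\begin{equation*}
G_{f,k}(x) = \sum_{d \le x} (\mu*f)(d) \, \lfloor x/d \rfloor^k,
\end{equation*}
which is the claimed formula.

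There is no real obstacle here; the only point requiring a word of care is the justification of the interchange of summations, but since $x$ is fixed all sums are finite, so Fubini for finite sums applies trivially. I would also note explicitly that $(\mu*f)(d)$ is well defined for any arithmetic function $f$, so the identity holds with no restriction, exactly as stated. This lemma will then feed directly into the later proofs, where one estimates $\sum_{d \le x} (\mu*f)(d)\lfloor x/d\rfloor^k$ by writing $\lfloor x/d\rfloor^k = (x/d)^k + O\big((x/d)^{k-1}\big)$ and controlling the tail via the growth of $(\mu*f)(d)$ for the relevant classes of additive $f$.
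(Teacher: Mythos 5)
Your proof is correct and follows exactly the paper's argument: write $f(n)=\sum_{d\mid n}(\mu*f)(d)$, swap the (finite) sums, and count the $k$-tuples of multiples of $d$ up to $x$ as $\lfloor x/d\rfloor^k$. The only blemish is the slip ``$f=\mu*(\mu*f)$'' (it should read $f=\1*(\mu*f)$), but the displayed identity you actually use is the correct one, so nothing is affected.
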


\begin{proof} Using that $f(n)= \sum_{d\mid n} (\mu*f)(d)$, one has 
\begin{equation*}
G_{f,k}(x) = \sum_{n_1,\ldots,n_k\le x} \sum_{d\mid (n_1,\ldots,n_k)} (\mu*f)(d) = \sum_{n_1=dj_1,\ldots, n_k=dj_k\le x}  (\mu*f)(d) 
\end{equation*}
\begin{equation*}
= \sum_{d\le x} (\mu*f)(d)  \sum_{j_1,\ldots,j_k\le x/d} 1 = \sum_{d\le x} (\mu*f)(d) \lfloor x/d \rfloor^k.
\end{equation*}
\end{proof}

\subsection{Properties of additive functions}

The next result is well-known.

\begin{lemma} \label{Lemma_Mobius}  If $f$ is an additive function, then 
\begin{equation*}
(\mu*f)(n) = \begin{cases} f(p^{\nu}) - f(p^{\nu-1}), & \text{ if $n=p^{\nu}$ ($\nu \ge 1$)}, \\ 0, & \text{ otherwise.}
\end{cases}
\end{equation*}
\end{lemma}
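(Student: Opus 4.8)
The plan is to verify the equivalent statement that $f = \1 * h$, where $h$ denotes the arithmetic function defined by the right-hand side of the asserted identity, i.e. $h(p^{\nu}) = f(p^{\nu}) - f(p^{\nu-1})$ for prime powers $p^{\nu}$ with $\nu \ge 1$, and $h(n) = 0$ otherwise. Once this is established, convolving both sides with $\mu$ and using that $\mu * \1$ is the identity element for Dirichlet convolution yields $\mu * f = h$, which is exactly the claim.

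To check $f = \1 * h$, fix $n \in \N$ and expand $\sum_{d \mid n} h(d)$. Since $h$ is supported on prime powers and $h(1) = 0$, the only divisors $d$ contributing are $d = p^{j}$ with $p^{\nu} \mid\mid n$ and $1 \le j \le \nu$, so
\begin{equation*}
\sum_{d \mid n} h(d) = \sum_{p^{\nu} \mid\mid n} \sum_{j=1}^{\nu} \bigl( f(p^{j}) - f(p^{j-1}) \bigr) = \sum_{p^{\nu} \mid\mid n} \bigl( f(p^{\nu}) - f(1) \bigr).
\end{equation*}
Each inner telescoping sum collapses, and since $f$ additive forces $f(1) = 0$ (from $f(1) = f(1\cdot 1) = 2f(1)$), the right-hand side equals $\sum_{p^{\nu} \mid\mid n} f(p^{\nu}) = f(n)$, the last equality being the defining property of additive functions recalled earlier. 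This proves $f = \1 * h$.

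Alternatively, one can argue directly. For a prime power, $(\mu*f)(p^{\nu}) = \mu(1) f(p^{\nu}) + \mu(p) f(p^{\nu-1}) = f(p^{\nu}) - f(p^{\nu-1})$, since $\mu(p^{j}) = 0$ for $j \ge 2$. If $n$ has $r \ge 2$ distinct prime factors $p_1, \dots, p_r$ with exponents $a_1, \dots, a_r$, then only squarefree $d \mid n$ contribute, so $(\mu*f)(n) = \sum_{S \subseteq \{1,\dots,r\}} (-1)^{|S|} f\bigl(n / \prod_{i\in S} p_i\bigr)$; expanding $f\bigl(n/\prod_{i\in S}p_i\bigr) = \sum_{i \notin S} f(p_i^{a_i}) + \sum_{i\in S} f(p_i^{a_i-1})$ by additivity and collecting, for each index $i$, the coefficient of $f(p_i^{a_i})$ and of $f(p_i^{a_i-1})$, one obtains in each case $\pm$ an alternating sum over all subsets of an $(r-1)$-element set, which vanishes because $r - 1 \ge 1$. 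Hence $(\mu*f)(n) = 0$.

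There is no genuine obstacle here — the lemma is a standard fact — but the point requiring a little attention is the behaviour at $n = 1$ and at the exponent $\nu = 1$ (where the term $f(p^{\nu-1})$ is $f(1) = 0$); the telescoping argument treats these uniformly, so I would present that version as the main proof and relegate the direct computation to a remark.
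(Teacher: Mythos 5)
Your main argument --- defining $h$ supported on prime powers by $h(p^{\nu})=f(p^{\nu})-f(p^{\nu-1})$, verifying $f=\1*h$ by the telescoping sum over divisors, and then applying M\"obius inversion --- is exactly the proof given in the paper, and it is correct. The additional direct computation of $(\mu*f)(n)$ is a fine sanity check but not needed.
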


\begin{proof} We have
\begin{equation*}
f(n) = \sum_{p^\nu \mid \mid n} f(p^\nu) = \sum_{\substack{p^\nu \mid n\\ \nu \ge 1}} (f(p^{\nu}) - f(p^{\nu-1})) =\sum_{d\mid n} g(d),
\end{equation*}
where
\begin{equation*}
g(d) = \begin{cases} f(p^{\nu}) - f(p^{\nu-1}), & \text{ if $d=p^{\nu}$ ($\nu \ge 1$)}, \\ 0, & \text{ otherwise.}
\end{cases}
\end{equation*}

Hence $f=g*\1$ and by M\"obius inversion we have $\mu*f=g$.
\end{proof}

We will use the following identity.

\begin{lemma} \label{Lemma_additiv_gcd} Let $f$ be an additive function and let $k\in \N$. Then 
\begin{equation*}
\sum_{n_1,\ldots,n_k\le x} f((n_1,\ldots,n_k)) = \sum_{\substack{p^{\nu} \le x\\ \nu \ge 1}} \left(f(p^{\nu}) - f(p^{\nu-1})\right) \left\lfloor \frac{x}{p^{\nu}} \right\rfloor^k
\end{equation*}
\end{lemma}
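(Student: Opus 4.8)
The plan is to combine the general identity of Lemma~\ref{Lemma_gcd} with the description of $\mu*f$ for additive $f$ given in Lemma~\ref{Lemma_Mobius}. First I would invoke Lemma~\ref{Lemma_gcd} to write
\begin{equation*}
\sum_{n_1,\ldots,n_k\le x} f((n_1,\ldots,n_k)) = \sum_{d\le x} (\mu*f)(d) \lfloor x/d \rfloor^k.
\end{equation*}
Then, since $f$ is additive, Lemma~\ref{Lemma_Mobius} tells us that $(\mu*f)(d)$ vanishes unless $d$ is a prime power $d=p^{\nu}$ with $\nu\ge 1$, in which case it equals $f(p^{\nu})-f(p^{\nu-1})$. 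Substituting this into the sum over $d\le x$ collapses the range to prime powers $p^{\nu}\le x$ with $\nu\ge 1$, yielding exactly
\begin{equation*}
\sum_{\substack{p^{\nu} \le x\\ \nu \ge 1}} \left(f(p^{\nu}) - f(p^{\nu-1})\right) \left\lfloor \frac{x}{p^{\nu}} \right\rfloor^k,
\end{equation*}
which is the claimed formula.

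There is essentially no obstacle here: the proof is a two-line bookkeeping argument chaining together two previously established lemmas. The only point worth a moment's care is making sure the index convention matches — in Lemma~\ref{Lemma_Mobius} the prime power $p^{\nu}$ ranges over all $\nu\ge 1$, and $f(p^0)=f(1)=0$ since additive functions vanish at $1$, so the $\nu=1$ term contributes $f(p)-f(1)=f(p)$ as one would expect. If one prefers a self-contained derivation that does not cite Lemma~\ref{Lemma_gcd}, one can instead expand $f((n_1,\ldots,n_k))=\sum_{d\mid(n_1,\ldots,n_k)}(\mu*f)(d)$ directly, interchange the order of summation, count the $k$-tuples $(n_1,\ldots,n_k)$ with each $n_i$ a multiple of $d$ and $n_i\le x$ as $\lfloor x/d\rfloor^k$, and then apply Lemma~\ref{Lemma_Mobius}; but reusing Lemma~\ref{Lemma_gcd} is cleaner. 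I would present the short version.
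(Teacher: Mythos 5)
Your proof is correct and is exactly the paper's argument: the paper also derives the identity by combining Lemma~\ref{Lemma_gcd} with Lemma~\ref{Lemma_Mobius}. Nothing further is needed.
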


\begin{proof} Follows at once by using Lemmas \ref{Lemma_gcd} and \ref{Lemma_Mobius}.
\end{proof}

\subsection{An identity involving the Eulerian numbers} \label{Sect_Eulerian_numbers}

Let $\langle {n \atop k} \rangle $ denote the (classical) Eulerian numbers, defined as the number of permutations $h\in S_n$ with $k$ descents. 
Here a number $i$ is called a descent of $h$ if $h(i) > h(i + 1)$. In the paper we use the identity
\begin{equation} \label{series_id}
\sum_{k=0}^{\infty} k^n x^k= \frac{x}{(1-x)^{n+1}} \sum_{k=0}^{n-1} \left\langle {n \atop k} \right\rangle x^k \quad (n\in \N_0, |x|<1),
\end{equation}
where in the RHS the sum is considered to be $1$ if $n=0$. 

Note that $\langle {n\atop 0}\rangle = \langle {n\atop n-1}\rangle = 1$, $\langle {n\atop n}\rangle = 0$ ($n\ge 1$), and the Eulerian numbers  
have the symmetry property $ \langle {n \atop k} \rangle = \langle {n \atop n-k-1} \rangle$ ($n\ge 1$, $k\ge 0$) and satisfy the recurrence relation 
\begin{equation*}
\left\langle {n \atop k} \right\rangle = (k+1) \left\langle {n-1 \atop k} \right\rangle + (n-k) \left\langle {n-1
\atop k-1} \right\rangle \quad (n\ge 1, k\ge 0),
\end{equation*}
where by convention, $\langle {0\atop 0} \rangle =1$ and $\langle {n\atop k} \rangle=0$ for $k<0$ and $n\ge 1$. See, e.g., \cite[Ch.\ 6]{GKP1994} and 
\cite[Ch.\ 1]{Pet2015}.

We deduce the following estimates.

\begin{lemma} \label{Lemma_est_Euler_numbers} For fixed $k,\ell\in \N$,
\begin{equation} \label{est_1}
\sum_{n=1}^{\infty} \frac{n^{\ell}}{p^{n k}} \ll \frac1{p^k} \quad  \text{ as $p\to \infty$},
\end{equation}
and
\begin{equation} \label{est_2}
\sum_{n=2}^{\infty} \frac{n^{\ell}}{p^{nk}} \ll \frac1{p^{2k}} \quad  \text{ as $p\to \infty$}.
\end{equation}
\end{lemma}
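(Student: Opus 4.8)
\textbf{Proof plan for Lemma \ref{Lemma_est_Euler_numbers}.}
The plan is to derive both estimates directly from the closed-form identity \eqref{series_id} applied with $x=1/p^k$, rather than estimating the tails $\sum n^\ell/p^{nk}$ crudely term by term. Setting $x=p^{-k}$ in \eqref{series_id} gives
\begin{equation*}
\sum_{n=0}^{\infty} \frac{n^{\ell}}{p^{nk}} = \frac{p^{-k}}{(1-p^{-k})^{\ell+1}} \sum_{\nu=0}^{\ell-1} \left\langle {\ell \atop \nu} \right\rangle \frac{1}{p^{\nu k}},
\end{equation*}
and since the $n=0$ term on the left vanishes (for $\ell\ge 1$), the left-hand side equals $\sum_{n=1}^{\infty} n^{\ell}/p^{nk}$. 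As $p\to\infty$ the factor $(1-p^{-k})^{-(\ell+1)}\to 1$ and the inner Eulerian sum tends to $\langle {\ell\atop 0}\rangle = 1$; both are bounded (say by $2$ and by $2^{\ell}$ respectively) for $p$ large enough, $k,\ell$ being fixed. Hence the whole expression is $\ll p^{-k}$, which is \eqref{est_1}.

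For \eqref{est_2} I would simply subtract off the $n=1$ term:
\begin{equation*}
\sum_{n=2}^{\infty} \frac{n^{\ell}}{p^{nk}} = \sum_{n=1}^{\infty} \frac{n^{\ell}}{p^{nk}} - \frac{1}{p^{k}} = \frac{p^{-k}}{(1-p^{-k})^{\ell+1}} \sum_{\nu=0}^{\ell-1} \left\langle {\ell \atop \nu} \right\rangle p^{-\nu k} - \frac{1}{p^{k}}.
\end{equation*}
The point is that the leading $p^{-k}$ contributions cancel: expanding $(1-p^{-k})^{-(\ell+1)} = 1 + O(p^{-k})$ and keeping only the $\nu=0$ Eulerian term (the $\nu\ge 1$ terms already carry an extra $p^{-\nu k}$, hence contribute $O(p^{-2k})$), the first summand is $p^{-k}(1 + O(p^{-k})) = p^{-k} + O(p^{-2k})$, so after subtracting $p^{-k}$ what remains is $O(p^{-2k})$. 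This gives \eqref{est_2}.

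Alternatively, and perhaps more cleanly for the write-up, one can prove \eqref{est_2} without reusing the Eulerian identity: for $n\ge 2$ and $p$ large, $n^{\ell}/p^{(n-2)k} \le n^{\ell}/2^{(n-2)k}$ is bounded, in fact $\sum_{n\ge 2} n^{\ell} q^{n-2}$ converges for $|q|<1$ and is continuous, so $\sum_{n\ge 2} n^{\ell}/p^{nk} = p^{-2k}\sum_{n\ge 2} n^{\ell} p^{-(n-2)k} \ll p^{-2k}$ uniformly as $p\to\infty$. The same shift-by-one trick also reproves \eqref{est_1}. I expect no real obstacle here: the only thing to be mildly careful about is the degenerate handling of the Eulerian sum when $\ell=0$ (excluded since $\ell\in\N$) and making explicit that the implied constants depend only on the fixed parameters $k$ and $\ell$, which is automatic from the convergence of the relevant power series.
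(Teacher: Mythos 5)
Your main argument is correct and is essentially the paper's own proof: both apply identity \eqref{series_id} with $x=p^{-k}$ to get \eqref{est_1}, and both obtain \eqref{est_2} by subtracting the $n=1$ term and observing that the leading $p^{-k}$ contributions cancel (the paper uses $\langle {\ell \atop 0}\rangle = 1$ for exactly this cancellation). Your alternative shift-by-one argument, bounding $\sum_{n\ge 2} n^{\ell}p^{-(n-2)k}$ by the convergent series at $p=2$, is also valid and arguably more elementary, but the primary route matches the paper.
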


\begin{proof} According to identity \eqref{series_id},
\begin{equation*}
\sum_{n=1}^{\infty} \frac{n^{\ell}}{p^{nk}} = \frac{1/p^{k}}{(1-1/p^{k})^{\ell+1}} \sum_{n=0}^{\ell-1} \left\langle {\ell \atop n} \right\rangle \frac1{p^{nk}}
\end{equation*}
\begin{equation*}
\ll  \frac1{p^{k}} \sum_{n=0}^{\ell-1} \left\langle {\ell \atop n} \right\rangle \frac1{p^{nk}} \ll \frac1{p^{k}},
\end{equation*}
where $\ell$ is fixed, we have finitely many terms, and the largest term -- with respect to $p$ -- of the sum is that for $n=0$. This gives \eqref{est_1}. Similarly,
\begin{equation*}
\sum_{n=2}^{\infty} \frac{n^{\ell}}{p^{nk}} = - \frac1{p^{k}} +\frac{1/p^{k}}{(1-1/p^{k})^{\ell+1}} \sum_{n=0}^{\ell-1} \left\langle {\ell \atop n} \right\rangle \frac1{p^{nk}}
\end{equation*}
\begin{equation*}
\ll  - \frac1{p^{k}}\left(1-\frac1{p^{k}} \right)^{\ell+1} +\frac1{p^{k}} \sum_{n=0}^{\ell-1} \left\langle {\ell \atop n} \right\rangle \frac1{p^{nk}}
\end{equation*}
\begin{equation*}
=  - \frac1{p^{k}}+ \frac{\ell+1}{p^{2k}} + \cdots + \frac{(-1)^{\ell}}{p^{(\ell+2)k}} + \frac1{p^{k}} \sum_{n=0}^{\ell-1} \left\langle {\ell \atop n} \right\rangle \frac1{p^{nk}}
\ll \frac1{p^{2k}},
\end{equation*}
where, since $\langle {\ell \atop 0} \rangle=1$, the term $1/p^{k}$ cancels out, giving \eqref{est_2}.
\end{proof}

\subsection{Estimates of certain sums}

The estimates of Lemma \ref{Lemma_est_Euler_numbers} are not sufficient for our proofs. We need good estimates on the sums $\sum_{n>z} \frac{n^{\ell}}{p^{nk}}$, where
$z\ge 1$ is a real number.

\begin{lemma}  \label{le:toth}
i) Let $k,\ell \in \N$, $p$ be a prime and $z \geqslant 1$ be a real number satisfying $z > \frac{\ell \max(1,\log z)}{k \log p}$. Then
\begin{equation} \label{est_main}
\sum_{n > z} \frac{n^\ell}{p^{n k}} \le  \frac{z^\ell}{p^{kz}} \left( \frac{1}{k \log p - \ell/z} + 1 \right).
\end{equation}

ii) In particular, if $k \log p > \max \left( \frac{\ell \max(1,\log z)}{z} \, , \, \frac{\ell}{z} + \frac{1}{2} \right)$, then
\begin{equation} \label{estimate_ell}
\sum_{n > z} \frac{n^\ell}{p^{n k}} \le  \frac{3 z^\ell}{p^{kz}}.
\end{equation}
\end{lemma}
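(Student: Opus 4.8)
The plan is to prove part (i) by comparing the tail sum with a geometric-type series and then obtain part (ii) as an immediate consequence.

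\medskip

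\textbf{Proof of part (i).} First I would set $q = p^k$, so that the sum becomes $\sum_{n>z} n^\ell q^{-n}$, and write the general term as $n^\ell q^{-n} = n^\ell e^{-n k \log p}$. The key observation is that the function $t \mapsto t^\ell e^{-ct}$ (with $c = k\log p > 0$) is eventually decreasing: its logarithmic derivative is $\ell/t - c$, which is negative precisely when $t > \ell/c = \frac{\ell}{k\log p}$. So under the hypothesis $z > \frac{\ell}{k\log p}$, the terms $n^\ell q^{-n}$ are decreasing for $n > z$, and more importantly one can bound the ratio of consecutive terms: for $n > z$,
\begin{equation*}
\frac{(n+1)^\ell q^{-(n+1)}}{n^\ell q^{-n}} = \left(1 + \frac1n\right)^\ell q^{-1} \le e^{\ell/n} q^{-1} < e^{\ell/z} p^{-k}.
\end{equation*}
However, a cleaner route that yields exactly the stated bound is to compare with an integral. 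Since $t^\ell e^{-ct}$ is decreasing on $[z,\infty)$, we have $\sum_{n>z} n^\ell q^{-n} \le \int_z^\infty t^\ell e^{-ct}\,dt$. Then integrating by parts (or bounding $t^\ell \le t^\ell$ crudely is not enough) — actually the sharpest elementary move is: on $[z,\infty)$ write $t^\ell e^{-ct} = t^\ell e^{-(c - \ell/z)t} \cdot e^{-\ell t/z}$ and note $t^\ell e^{-\ell t /z} \le z^\ell e^{-\ell}$... but the simplest is to bound $t^\ell \le z^\ell e^{(\ell/z)(t-z)}$ for $t \ge z$ (which follows from $\log(t/z) \le (t-z)/z$), giving
\begin{equation*}
\int_z^\infty t^\ell e^{-ct}\,dt \le z^\ell e^{-\ell} \int_z^\infty e^{-(c - \ell/z)t}\,dt = \frac{z^\ell e^{-\ell}}{c - \ell/z}\, e^{-(c-\ell/z)z} = \frac{z^\ell}{p^{kz}(k\log p - \ell/z)}.
\end{equation*}
This is precisely \eqref{est_main}. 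I should double-check the discrete-to-integral step: since the summand is decreasing on $[z,\infty)$, $\sum_{n > z} a_n = \sum_{n \ge \lceil z \rceil} a_n \le \int_{\lceil z\rceil - 1}^\infty a(t)\,dt \le \int_z^\infty a(t)\,dt$ provided $\lceil z \rceil - 1 \ge z$... that last inequality fails in general, so instead I use $\sum_{n \ge \lceil z\rceil} a_n \le a(\lceil z\rceil) + \int_{\lceil z\rceil}^\infty a(t)\, dt \le \int_{\lceil z\rceil - 1}^\infty a(t)\,dt$, and since $\lceil z\rceil - 1 < z \le \lceil z \rceil$, monotonicity on $[\lceil z\rceil-1,\infty)$ may not hold if $\lceil z\rceil - 1 < \ell/(k\log p)$. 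To be safe I would instead just directly bound $\sum_{n>z}a_n \le \sum_{n \ge \lceil z\rceil} a_n$ and then, since $a$ is decreasing on $[z,\infty)$ and $\lceil z \rceil \ge z$, use $\sum_{n\ge \lceil z\rceil} a_n \le \int_{\lceil z\rceil - 1}^{\infty}a(t)dt$ only when $\lceil z\rceil-1\ge \ell/(k\log p)$; otherwise the first term or two can be absorbed. The safest and most standard phrasing: $\sum_{n>z} a_n \le \int_z^\infty a(t)\,dt + a(z)$-type estimates, or simply invoke that $\sum_{n > z} a_n \le \int_{z}^{\infty} a(t)\, dt$ holds whenever $a$ is non-increasing on $(z-1,\infty)$ after noting the hypothesis forces monotonicity throughout the relevant range — I will state the monotonicity carefully.

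\medskip

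\textbf{Proof of part (ii).} With $k$, $\ell$, $z$ all fixed, as $p \to \infty$ the hypothesis $z > \frac{\ell}{k\log p}$ is automatically satisfied for all large $p$, and the denominator $k\log p - \ell/z \to \infty$, in particular $k\log p - \ell/z \ge \tfrac12 k \log p \ge$ some positive constant for $p$ large. Also $z^\ell$ is a constant. Hence \eqref{est_main} gives $\sum_{n > z} n^\ell p^{-nk} \ll p^{-kz}$. Finally, the statement \eqref{estimate_ell} is written with $n \ge z$ rather than $n > z$; these differ by at most the single term $n = z$ (present only when $z \in \N$), which is $z^\ell p^{-kz} \ll p^{-kz}$, so the bound is unaffected.

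\medskip

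\textbf{Main obstacle.} There is no deep obstacle here — this is an elementary tail estimate. The one point requiring genuine care is the passage from the discrete sum to the integral (or geometric series) bound, making sure the monotonicity hypothesis $z > \frac{\ell}{k\log p}$ is used correctly so that $t\mapsto t^\ell e^{-kt\log p}$ is decreasing on the whole range of summation, and getting the constant in the denominator to come out as exactly $k\log p - \ell/z$ rather than something weaker. The inequality $\log(t/z)\le (t-z)/z$ for $t\ge z$ (equivalently $1 + u \le e^u$) is the trick that produces the clean closed form.
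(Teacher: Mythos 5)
Your evaluation of the integral is correct, and it is in fact a more elementary route than the paper's: the paper obtains $\int_z^\infty e^{-g(t)}\,dt \le e^{-g(z)}/(k\log p - \ell/z)$ for $g(t)=kt\log p-\ell\log t$ via an Ostrowski-type inequality (Lemma~\ref{le:vdc_type}), whereas your pointwise bound $t^\ell \le z^\ell e^{(\ell/z)(t-z)}$ is precisely the tangent-line inequality $g(t)\ge g(z)+\bigl(k\log p-\tfrac{\ell}{z}\bigr)(t-z)$ and yields the same value of the integral directly. The genuine gap is exactly the step you circled around several times and never closed: the comparison $\sum_{n>z}a(n)\le\int_z^\infty a(t)\,dt$ for $a(t)=t^\ell p^{-tk}$. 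None of your proposed repairs works, and in fact none can, because \eqref{est_main} as stated is false: the first term of the sum, $a(\lfloor z\rfloor+1)$, can already exceed the right-hand side. Concretely, take $p=3$, $k=2$, $\ell=1$, $z=1.999$ (so $z>\ell/(k\log p)\approx 0.455$); then the left side is $\sum_{n\ge 2} n\,9^{-n}=\tfrac{9}{64}-\tfrac19\approx 0.0295$, while the right side is $1.999/\bigl(3^{3.998}(2\log 3-1/1.999)\bigr)\approx 0.0146$. (The paper's own proof asserts the same sum-to-integral inequality without justification and suffers from the same defect.)

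What you should do instead is split off the first term. The least index $n_0$ with $n_0>z$ satisfies $n_0\ge z$, and $a$ is decreasing on $[z,\infty)$ by the hypothesis $z>\ell/(k\log p)$, so $a(n_0)\le a(z)=z^\ell p^{-kz}$; the remaining terms satisfy $\sum_{n\ge n_0+1}a(n)\le\int_{n_0}^\infty a(t)\,dt\le\int_z^\infty a(t)\,dt$, which is now legitimate since $[n_0,\infty)\subset[z,\infty)$. Combined with your integral bound this gives
\begin{equation*}
\sum_{n>z}\frac{n^\ell}{p^{nk}}\;\le\;\frac{z^\ell}{p^{kz}}\left(1+\frac{1}{k\log p-\frac{\ell}{z}}\right),
\end{equation*}
which is all that is actually needed: part (ii) follows verbatim (your handling of the single extra term at $n=z$ is fine), and every use of the lemma downstream --- the bounds \eqref{S_k}, \eqref{S_1} and the various $\ll p^{-kz}$ estimates --- only requires the order of magnitude, which is unchanged.
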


Note that \eqref{est_1} and \eqref{est_2} are special cases of \eqref{estimate_ell}. The proof of Lemma \ref{le:toth} uses the following 
van der Corput type bound.

\begin{lemma} \label{le:vdc_type}
Let $a<b$ be real numbers and $g \in C^1 [a,b]$ such that $g \ge 0$, $g^{\, \prime}$ is non-decreasing and there exists 
$\lambda_1 >0$ such that, for all $x \in [a,b]$, we have $g^{\, \prime}(x) \ge  \lambda_1$. Then
\begin{equation*}
\int_a^b e^{-g(t)} \, \mathrm{d}t \le \frac{e^{-g(a)}}{\lambda_1}.
\end{equation*}
\end{lemma}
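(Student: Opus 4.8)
The plan is to exploit the elementary pointwise identity $e^{-g(t)} = \frac{1}{g'(t)}\cdot\bigl(g'(t)e^{-g(t)}\bigr)$, which is meaningful on all of $[a,b]$ precisely because the hypothesis $g'(x)\ge\lambda_1>0$ guarantees that $g'$ never vanishes. Two observations drive the argument: first, the factor $g'(t)e^{-g(t)}$ is an exact derivative, namely $g'(t)e^{-g(t)} = -\frac{\mathrm d}{\mathrm dt}e^{-g(t)}$, and it is non-negative on $[a,b]$ since $g'(t)\ge\lambda_1>0$ and $e^{-g(t)}>0$; second, the factor $\frac{1}{g'(t)}$ satisfies $0<\frac{1}{g'(t)}\le\frac{1}{\lambda_1}$, again by $g'(x)\ge\lambda_1$.

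First I would combine these into the pointwise bound $e^{-g(t)} \le \frac{1}{\lambda_1}\,g'(t)e^{-g(t)}$ for every $t\in[a,b]$: indeed, multiplying the non-negative quantity $g'(t)e^{-g(t)}$ by $\frac{1}{g'(t)}\le\frac{1}{\lambda_1}$ can only decrease it (or leave it unchanged), while $\frac{1}{g'(t)}\cdot g'(t)e^{-g(t)} = e^{-g(t)}$. Then I would integrate over $[a,b]$ and apply the fundamental theorem of calculus to the right-hand side (which is legitimate since $g'$, and hence $g'e^{-g}$, is continuous on $[a,b]$):
\begin{equation*}
\int_a^b e^{-g(t)}\,\mathrm dt \;\le\; \frac{1}{\lambda_1}\int_a^b g'(t)e^{-g(t)}\,\mathrm dt \;=\; \frac{1}{\lambda_1}\bigl(e^{-g(a)}-e^{-g(b)}\bigr).
\end{equation*}
Finally, dropping the non-negative term $e^{-g(b)}\ge 0$ yields $\int_a^b e^{-g(t)}\,\mathrm dt \le \frac{e^{-g(a)}}{\lambda_1}$, as claimed.

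I expect no genuine obstacle here; the only point deserving a word of care is the legitimacy of dividing by $g'$, which is ensured by the uniform lower bound $g'\ge\lambda_1$. I note in passing that this route uses neither the monotonicity of $g'$ nor the condition $g\ge 0$. If one prefers to use those hypotheses, an alternative is to write $\int_a^b e^{-g} = \int_a^b \frac{1}{g'}\cdot(g'e^{-g})$ and invoke Bonnet's form of the second mean value theorem for integrals, using that $1/g'$ is non-negative and non-increasing (this is where $g'$ non-decreasing enters): there exists $\xi\in[a,b]$ with $\int_a^b e^{-g} = \frac{1}{g'(a)}\int_a^\xi g'(t)e^{-g(t)}\,\mathrm dt = \frac{1}{g'(a)}\bigl(e^{-g(a)}-e^{-g(\xi)}\bigr) \le \frac{e^{-g(a)}}{\lambda_1}$, which gives the same bound.
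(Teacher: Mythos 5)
Your proof is correct, and it takes a genuinely simpler route than the paper. The paper reaches the same conclusion via an inequality of Ostrowski (a second-mean-value-theorem device, in the spirit of van der Corput's first derivative test), choosing $F(t)=1/g'(t)$ and $G(t)=g'(t)e^{-g(t)}$; this is essentially the Bonnet variant you sketch at the end, and it is the step that consumes the hypothesis that $g'$ is non-decreasing (so that $1/g'$ is monotone with $|F(a)|\ge |F(b)|$). Your primary argument --- the pointwise bound $e^{-g(t)}\le \lambda_1^{-1}g'(t)e^{-g(t)}$ followed by the fundamental theorem of calculus --- needs only $g'\ge \lambda_1>0$, and you are right that it uses neither the monotonicity of $g'$ nor $g\ge 0$; in that sense your proof establishes the lemma under weaker hypotheses. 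What the paper's route buys is the marginally sharper constant $e^{-g(a)}/g'(a)$ in place of $e^{-g(a)}/\lambda_1$, but since $g'$ is assumed non-decreasing the natural choice is $\lambda_1=g'(a)$, and in the application (Lemma \ref{le:toth}, where $g(t)=tk\log p-\ell\log t$ and the interval starts at $z$) the two bounds coincide, so nothing is lost by your more elementary argument.
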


\begin{proof}[Proof of Lemma~\ref{le:vdc_type}]
Use the following inequality due to Ostrowski. See, e.g., \cite[(3.7.35)]{Mit1970}. Let $a<b$ be real numbers and $F$ be a real-valued function, $G$ be a complex-valued 
function, both integrable on $[a,b]$ such that $F$ is monotone, $F(a) F(b) \ge  0$ and $|F(a)| \ge |F(b)|$. Then
\begin{equation*}
\left | \int_a^b F(t) \, G(t) \, \mathrm{d}t \right | \le |F(a)| \, \max_{a\le x\le b} \left | 
\int_a^x G(t) \, \mathrm{d}t \right |.
\end{equation*}

Select $F(t) = 1/g^{\, \prime}(t)$ and $G(t) = g^{\, \prime}(t) e^{-g(t)}$  and proceeding as in van der Corput's first derivative test, we have
\begin{equation*}
\int_a^b e^{-g(t)} \, \textrm{d}t \le  \frac{1}{g^{\, \prime}(a)} \max_{a\le x\le b} \ \int_a^x g'(t) e^{-g(t)} \, \textrm{d}t \le  \frac{1}{\lambda_1} \max_{a\le x\le b}\ \left( e^{-g(a)} - e^{-g(x)} \right) \le  \frac{e^{-g(a)}}{\lambda_1}
\end{equation*}
as asserted.
\end{proof}

\begin{proof}[Proof of Lemma~\ref{le:toth}]
Apply Lemma~\ref{le:vdc_type} with $g(t):= tk \log p - \ell \log t$, for which $g^{\, \prime} (t) = k \log p - \frac{\ell}{t} \ge k \log p - \frac{\ell}{z}$ 
for all $t \ge z$. Note that the condition $z > \frac{\ell \max(1,\log z)}{k \log p}$ ensures that both $g(z) > 0$ and $g^{\, \prime} (z) > 0$. Lemma~\ref{le:vdc_type} 
then yields
\begin{equation*}
   \sum_{n > z} \frac{n^\ell}{p^{n k}} = \sum_{n > z} e^{- g(n)} \le \int_z^\infty e^{-g(t)} \, \textrm{d}t \ + \; e^{-g(z)}  
 \end{equation*}
 \begin{equation*}
    \le \frac{e^{-g(z)}}{k \log p - \ell/z} \ + \; e^{-g(z)}
   = \frac{z^\ell}{p^{kz}} \left( \frac{1}{k \log p - \ell/z} \ + \; 1 \right).
\end{equation*}
The second part of the lemma follows by noticing that, if $k \log p > \frac{\ell}{z} + \frac{1}{2}$, then $\frac{1}{k \log p - \ell/z} < 2$.
\end{proof}

\begin{lemma} Let $k,\ell \in \N$ be fixed. We have, as $x\to \infty$,
\begin{equation} \label{S_k}
\sum_{p \le x}  \ \sum_{\nu > \log x/ \log p} \frac{\nu^{\ell}}{p^{\nu k}} \ll \frac1{x^{k-1}(\log x)^2},
\end{equation}
\begin{equation} \label{S_1}
\sum_{p \le x^{\1/2}}  \ \sum_{\nu > \log x/ \log p} \frac{\nu^{\ell}}{p^{\nu}} \ll \frac1{x^{1/2} (\log x)^2}.
\end{equation}
\end{lemma}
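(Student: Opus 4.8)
The plan is to estimate the two double sums by applying part (i) of Lemma~\ref{le:toth} to bound the inner sum over $\nu$, and then to sum the resulting estimate over primes $p$ using Chebyshev/Mertens-type bounds. For \eqref{S_k}, I would set $z = z(p) := \log x / \log p$, so that $p^{kz} = x^k$ and $z^{\ell} = (\log x/\log p)^{\ell}$. Provided $p \le x$ we have $\log p \le \log x$, so the hypothesis $z > \ell/(k\log p)$ of Lemma~\ref{le:toth}(i) holds once $x$ is large (indeed $z \ge 1$). For primes $p$ bounded away from $x$, say $p \le \sqrt x$, the denominator $k\log p - \ell/z$ is $\gg \log p$, and for $\sqrt x < p \le x$ one checks $k \log p - \ell/z \gg \log x \gg 1$ as well; in either range it is $\gg 1$. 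Hence
\begin{equation*}
\sum_{\nu > z(p)} \frac{\nu^{\ell}}{p^{\nu k}} \ll \frac{z(p)^{\ell}}{x^k \log p} \ll \frac{(\log x)^{\ell}}{x^k (\log p)^{\ell+1}}.
\end{equation*}
Summing over $p \le x$ gives $\ll \frac{(\log x)^{\ell}}{x^k} \sum_{p\le x} (\log p)^{-\ell-1}$, and since $\sum_{p \le x}(\log p)^{-\ell-1} \ll x/(\log x)^{\ell+2}$ by the prime number theorem (or partial summation from $\pi(t) \ll t/\log t$), the whole sum is $\ll \frac{1}{x^{k-1}(\log x)^2}$, which is \eqref{S_k}.

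For \eqref{S_1} the same strategy applies with $k$ replaced by $1$ but now summing only over $p \le x^{1/2}$. With $z(p) = \log x/\log p$ we get $p^{z(p)} = x$, and for $p \le x^{1/2}$ we have $\log p \le \tfrac12 \log x$, so $z(p) \ge 2$ and the denominator $\log p - \ell/z(p) \gg \log p$; thus
\begin{equation*}
\sum_{\nu > z(p)} \frac{\nu^{\ell}}{p^{\nu}} \ll \frac{z(p)^{\ell}}{x \log p} \ll \frac{(\log x)^{\ell}}{x (\log p)^{\ell+1}}.
\end{equation*}
Summing over $p \le x^{1/2}$ and using $\sum_{p \le x^{1/2}} (\log p)^{-\ell-1} \ll x^{1/2}/(\log x)^{\ell+2}$ yields $\ll \frac{1}{x^{1/2}(\log x)^2}$, giving \eqref{S_1}.

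The only delicate point is making the lower bound $k\log p - \ell/z(p) \gg 1$ (resp.\ $\gg \log p$) uniform over the relevant range of $p$, and in particular handling the primes $p$ close to $x$ in \eqref{S_1} — but there the restriction to $p \le x^{1/2}$ is exactly what guarantees $z(p) \ge 2$, so $\ell/z(p) \le \ell/2$ is harmless against $k\log p$ for $x$ large. I expect this to be the main (minor) obstacle; everything else is a routine combination of Lemma~\ref{le:toth}(i) with standard prime-counting estimates, and the constants $k,\ell$ being fixed means all implied constants may depend on them.
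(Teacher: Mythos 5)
Your proposal is correct and is essentially the paper's own proof: apply Lemma~\ref{le:toth}(i) with $z=\log x/\log p$ so that the inner sum is $\ll (\log x)^{\ell}x^{-k}(\log p)^{-\ell-1}$, then sum over primes using $\sum_{p\le x}(\log p)^{-\ell-1}\ll x/(\log x)^{\ell+2}$ (which the paper obtains by splitting at $\sqrt{x}$ and Chebyshev, as you suggest). The only spot to tighten is your justification that the denominator is $\gg\log p$: since $\ell/z=\ell\log p/\log x$, one has $k\log p-\ell/z=\bigl(k-\ell/\log x\bigr)\log p\ge \tfrac{k}{2}\log p$ once $x\ge e^{2\ell/k}$, uniformly in $p$ --- this replaces your remark that ``$\ell/z(p)\le\ell/2$ is harmless against $k\log p$,'' which as stated would fail for $p=2$ and $\ell$ large.
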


\begin{proof} Assume $x \ge e^{2 \ell/k}$ and use Lemma~\ref{le:toth} with $z = \frac{\log x}{\log p}$. Note that 
the condition $z > \frac{\ell}{k\log p}$ is fulfilled as soon as $x > e^{\ell/k}$. Now Lemma~\ref{le:toth} yields
\begin{equation*}
   \sum_{\nu > \log x/ \log p} \frac{\nu^\ell}{p^{\nu k}} 
   \le \frac{1}{x^k} \left( \frac{\log x}{\log p}\right)^{\ell} 
   \left(k \log p - \frac{\ell \log p}{\log x}\right)^{-1} 
 \end{equation*}
 \begin{equation*}
 = \frac{1}{kx^k} \frac{(\log x)^{\ell}}{(\log p)^{\ell+1}} \left( 1 - \frac{\ell}{k\log x}\right)^{-1} 
    \le \frac{2(\log x)^{\ell}}{kx^k(\log p)^{\ell+1}},
\end{equation*}
where we used the inequality $(1-a)^{-1} \le 2$ provided that $0 \le a \le \frac{1}{2}$. Hence
\begin{equation} \label{S}
S:= \sum_{p \le x}  \ \sum_{\nu > \log x/ \log p} \frac{\nu^{\ell}}{p^{\nu k}} \ll 
\frac{(\log x)^\ell}{x^k} \sum_{p \le x} \frac{1}{(\log p)^{\ell+1}}.
\end{equation}

We also have
\begin{equation} \label{est_split}
\sum_{p \le x} \frac{1}{(\log p)^{\ell+1}} = \left( \sum_{p \le  \sqrt{x}} + \sum_{\sqrt{x} < p \le x}\right) \frac{1}{(\log p)^{\ell+1}}
\le \frac{\pi(\sqrt x)}{(\log 2)^{\ell+1}} + \frac{2^{\ell + 1} \pi(x)}{(\log x)^{\ell+1}} \ll \frac{x}{(\log x)^{\ell+2}},
\end{equation}
using the Chebysev estimate $\pi(x)\ll x/\log x$.

From \eqref{S} and \eqref{est_split} we obtain that
\begin{equation*} 
S\ll  \frac{1}{x^{k-1}(\log x)^2}.
\end{equation*}

This proves estimate \eqref{S_k}. The proof of \eqref{S_1} is similar.
\end{proof}

\section{Proofs of the asymptotic formulas} \label{Sect_Proofs}

\subsection{Proofs of the results in Section \ref{Sect_F_0}}

\begin{proof}[Proof of Theorem {\rm \ref{Th_f_additive}}]
Using Lemma \ref{Lemma_additiv_gcd} for $k=1$, and that $f(p)=1$,
\begin{equation*}
\sum_{n\le x } f(n)  = \sum_{\substack{p^{\nu} \le x\\ \nu \ge 1}} \left(f(p^{\nu}) - f(p^{\nu-1})\right) \left\lfloor \frac{x}{p^{\nu}} \right\rfloor
\end{equation*}
\begin{equation} \label{S_S}
 = \sum_{p\le x} \left\lfloor \frac{x}{p}\right\rfloor + \sum_{\substack{p^{\nu} \le x\\ \nu \ge 2}} \left(f(p^{\nu}) - f(p^{\nu-1})\right) \left\lfloor \frac{x}{p^{\nu}} \right\rfloor =: S_1 + S_2,
\end{equation}
where $S_1:= \sum_{p\le x} \left\lfloor \frac{x}{p}\right\rfloor = \sum_{n\le x} \omega(n)$ and Saffari's estimate \eqref{est_Saffari} can be applied.

We estimate the sum
\begin{equation*}
S_2:= \sum_{\substack{p^{\nu} \le x\\ \nu \ge 2}} \left(f(p^{\nu}) - f(p^{\nu-1})\right) \left\lfloor \frac{x}{p^{\nu}} \right\rfloor
\end{equation*}
\begin{equation} \label{terms}
=  x \sum_{\substack{p^{\nu} \le x\\ \nu \ge 2}} \frac{f(p^{\nu}) - f(p^{\nu-1})}{p^{\nu}} - 
\sum_{\substack{p^{\nu} \le x\\ \nu \ge 2}} \left(f(p^{\nu}) - f(p^{\nu-1})\right) (x/p^{\nu} - \lfloor x/ p^{\nu} \rfloor).
\end{equation}

By the definition of the class ${\cal F}_0$ we have $f(p^{\nu}) - f(p^{\nu-1})\ll \nu^{\ell}$, uniformly for the primes $p$ and $\nu \ge 2$, for some $\ell \in \N_0$, and the second sum in 
\eqref{terms} is 
\begin{equation*}
\ll \sum_{\substack{p^{\nu} \le x\\ \nu \ge 2}} \nu^{\ell} = \sum_{p\le x^{1/2}} \sum_{2\le \nu \le \log x/\log p } \nu^{\ell}
\ll  (\log x)^{\ell +1} \sum_{p\le x^{1/2}}  \frac1{(\log p)^{\ell +1}} \ll \frac{x^{1/2}}{\log x},
\end{equation*}
by using \eqref{est_split}.

We also have
\begin{equation*}
\sum_p \sum_{\nu =2}^{\infty} \frac{|f(p^{\nu}) - f(p^{\nu-1})|}{p^{\nu}} \ll \sum_p \sum_{\nu =2}^{\infty} \frac{\nu^{\ell}}{p^{\nu}}
\ll \sum_p  \frac1{p^2} <\infty,
\end{equation*}
by estimate \eqref{est_2} with $k=1$ or by \eqref{estimate_ell} applied for $z=2$, $k=1$. Hence the series 
\begin{equation*}
\sum_p \sum_{\nu =2}^{\infty} \frac{f(p^{\nu}) - f(p^{\nu-1})}{p^{\nu}}  = \sum_p \left( \left(1-\frac1{p}\right)\sum_{\nu=1}^{\infty} \frac{f(p^{\nu})}{p^{\nu}} -\frac1{p} \right)=: B_f
\end{equation*}
is absolutely convergent. Also
\begin{equation*}
\sum_{\substack{p^{\nu} \le x\\ \nu \ge 2}} \frac{f(p^{\nu}) - f(p^{\nu-1})}{p^{\nu}} = 
\sum_{p\le x^{1/2}} \sum_{\substack{\nu \ge 2\\ p^{\nu} \le x}} \frac{f(p^{\nu}) - f(p^{\nu-1})}{p^{\nu}} 
\end{equation*}
\begin{equation*}
= \sum_{p\le x^{1/2}} \left( \sum_{\nu =2}^{\infty} \frac{f(p^{\nu}) - f(p^{\nu-1})}{p^{\nu}} 
- \sum_{\nu > \log x/ \log p}  \frac{f(p^{\nu}) - f(p^{\nu-1})}{p^{\nu}} \right)
\end{equation*}
\begin{equation*}
= \sum_p \sum_{\nu =2}^{\infty} \frac{f(p^{\nu}) - f(p^{\nu-1})}{p^{\nu}} 
-\sum_{p>x^{1/2}} \sum_{\nu =2}^{\infty} \frac{f(p^{\nu}) - f(p^{\nu-1})}{p^{\nu}}
\end{equation*}
\begin{equation} \label{last_sum}
- \sum_{p\le x^{1/2}} \sum_{\nu > \log x/ \log p}  \frac{f(p^{\nu}) - f(p^{\nu-1})}{p^{\nu}}, 
\end{equation}
where
\begin{equation*}
\sum_{p>x^{1/2}} \sum_{\nu =2}^{\infty} \frac{f(p^{\nu}) - f(p^{\nu-1})}{p^{\nu}} \ll \sum_{p>x^{1/2}} \sum_{\nu=2}^{\infty} 
\frac{\nu^{\ell}}{p^{\nu}}
\ll \sum_{p>x^{1/2}} \frac1{p^2} \ll \sum_{n>x^{1/2}} \frac1{n^2} \ll \frac1{x^{1/2}},
\end{equation*}
again by \eqref{est_2} or by \eqref{estimate_ell}.

Furthermore, the double sum in \eqref{last_sum} is $\ll \frac1{x^{1/2}(\log x)^2}$ by estimate \eqref{S_1}.

Putting these altogether gives
\begin{equation} \label{est_final_S_2}
S_2= B_f + O(x^{1/2}).
\end{equation}

Now the proof is complete by \eqref{S_S}, \eqref{est_Saffari} and  \eqref{est_final_S_2}.
\end{proof}

\begin{proof}[Proof of Corollary {\rm \ref{Cor_Omega_ell}}]
Follows from Theorem \ref{Th_f_additive} and identity \eqref{series_id}.
\end{proof}

\begin{proof}[Proof of Corollary {\rm \ref{Cor_T_ell}}]
Follows from Theorem \ref{Th_f_additive} and the known identity 
\begin{equation}  \label{id_binom}
\sum_{k=0}^{\infty} \binom{n+k-1}{k} x^k= \frac1{(1-x)^n} \quad (n\in \N_0, |x|<1).
\end{equation}
\end{proof}

\begin{proof}[Proof of Theorem {\rm \ref{Th_f_additive_gcd}}]
Using Lemma \ref{Lemma_additiv_gcd},
\begin{equation*}
\sum_{n_1,\ldots,n_k\le x} f((n_1,\ldots,n_k)) = \sum_{\substack{p^{\nu} \le x\\ \nu \ge 1}} \left(f(p^{\nu}) - f(p^{\nu-1})\right) \left\lfloor \frac{x}{p^{\nu}} \right\rfloor^k,
\end{equation*} 
where
\begin{equation*}
\left\lfloor \frac{x}{p^{\nu}} \right\rfloor^k = \left( \frac{x}{p^{\nu}} + O(1) \right)^k = \left(\frac{x}{p^{\nu}}\right)^k + O\left(\left(\frac{x}{p^{\nu}}\right)^{k-1}\right).
\end{equation*}

We deduce
\begin{equation*}
\sum_{n_1,\ldots,n_k\le x} f((n_1,\ldots,n_k)) = x^k \sum_{\substack{p^{\nu} \le x\\ \nu \ge 1}} \frac{f(p^{\nu}) - f(p^{\nu-1})}{p^{\nu k}} + x^{k-1} R_{f,k}(x),
\end{equation*}
where
\begin{equation*}
R_{f,k}(x) \ll \sum_{\substack{p^{\nu} \le x\\ \nu \ge 1}} \frac{f(p^{\nu}) - f(p^{\nu-1})}{p^{\nu (k-1)}}
\ll \sum_{p\le x}  \sum_{\nu=1}^{\infty} \frac{\nu^{\ell}}{p^{\nu (k-1)}} \ll \sum_{p\le x} \frac1{p^{k-1}}, 
\end{equation*}
using \eqref{estimate_ell} or \eqref{est_1}, which is $\ll \log \log x$ for $k=2$ and is $\ll 1$ for $k\ge 3$.

Here 
\begin{equation*}
\sum_p \sum_{\nu = 1}^{\infty} \frac{|f(p^{\nu}) - f(p^{\nu-1})|}{p^{\nu k}}  \ll  
\sum_p \sum_{\nu =1}^{\infty} \frac{\nu^{\ell}}{p^{\nu k}} \ll \sum_p \frac1{p^k} < \infty,
\end{equation*}
again by \eqref{est_1} or \eqref{estimate_ell}, where $k\ge 2$, hence the series 
\begin{equation*}
\sum_p \sum_{\nu = 1}^{\infty} \frac{f(p^{\nu}) - f(p^{\nu-1})}{p^{\nu k}} = 
\sum_p \left(1-\frac1{p^k}\right) \sum_{\nu = 1}^{\infty} \frac{f(p^{\nu})}{p^{\nu k}} =: D_{f,k}
\end{equation*}
is absolutely convergent.

Furthermore,
\begin{equation*}
\sum_{\substack{p^{\nu} \le x\\ \nu \ge 1}} \frac{f(p^{\nu}) - f(p^{\nu-1})}{p^{\nu k}} = \sum_{p\le x} \sum_{1\le \nu \le \log x/\log p} 
\frac{f(p^{\nu}) - f(p^{\nu-1})}{p^{\nu k}}
\end{equation*}
\begin{equation*}
= \sum_{p\le x} \left(\sum_{\nu =1}^{\infty} \frac{f(p^{\nu}) - f(p^{\nu-1})}{p^{\nu k}} -  \sum_{\nu  > \log x/\log p} 
\frac{f(p^{\nu}) - f(p^{\nu-1})}{p^{\nu k}} \right)
\end{equation*}
\begin{equation*}
= \sum_p \sum_{\nu =1}^{\infty} \frac{f(p^{\nu}) - f(p^{\nu-1})}{p^{\nu k}} -  \sum_{p>x} \sum_{\nu=1}^{\infty} 
\frac{f(p^{\nu}) - f(p^{\nu-1})}{p^{\nu k}}  - \sum_{p\le x} \sum_{\nu  > \log x/\log p} 
\frac{f(p^{\nu}) - f(p^{\nu-1})}{p^{\nu k}}.
\end{equation*}

Here
\begin{equation*}
\sum_{p>x} \sum_{\nu =1}^{\infty} \frac{f(p^{\nu}) - f(p^{\nu-1})}{p^{\nu k}} \ll
\sum_{p>x} \sum_{\nu =1}^{\infty} \frac{\nu^{\ell}}{p^{\nu k}} \ll \sum_{p>x} \frac1{p^k} \ll \sum_{n>x} \frac1{n^k} \ll \frac1{x^{k-1}}, 
\end{equation*}
again by \eqref{est_1} or \eqref{estimate_ell}. Also,
\begin{equation*}
\sum_{p\le x} \sum_{\nu  > \log x/\log p} \frac{f(p^{\nu}) - f(p^{\nu-1})}{p^{\nu k}} 
\ll \sum_{p\le x} \sum_{\nu > \log x/\log p} \frac{\nu^{\ell}}{p^{\nu k}} \ll \frac1{x^{k-1}(\log x)^2}, 
\end{equation*}
by \eqref{S_k}. This completes the proof.
\end{proof}

\begin{proof}[Proof of Corollary {\rm \ref{Cor_Omega_ell_gcd}}] 
Follows from Theorem \ref{Th_f_additive_gcd} and identities \eqref{series_id}, \eqref{id_binom}.
\end{proof}

\begin{proof}[Proof of Theorem {\rm \ref{Th_f_additive_lcm}}] 

By Proposition \ref{Prop_key} and by symmetry we have
\begin{equation*}
L_{f,k}(x) = \sum_{n_1,\ldots,n_k\le x} f([n_1,\ldots,n_k]) = \sum_{n_1,\ldots,n_k\le x} \sum_{1\le j\le k} (-1)^{j-1} 
\sum_{1\le i_1< \ldots < i_j \le  k} f((n_{i_1},\ldots,n_{i_j}))
\end{equation*}
\begin{equation*}
= \sum_{1\le j\le k} (-1)^{j-1} \binom{k}{j} \sum_{n_1,\ldots,n_k\le x}  f((n_1,\ldots,n_j))
\end{equation*}
\begin{equation*}
= \sum_{1\le j\le k} (-1)^{j-1} \binom{k}{j} \sum_{n_1,\ldots,n_j\le x} f((n_1,\ldots,n_j)) \sum_{n_{j+1},\ldots,n_k\le x} 1,
\end{equation*}
where the last sum is $\lfloor x \rfloor^{k-j}= x^{k-j}+ O(x^{k-j-1})$. Therefore, by Theorems \ref{Th_f_additive} and \ref{Th_f_additive_gcd},
\begin{equation*}
L_{f,k}(x)=k \left(\sum_{n_1\le x} f(n_1)\right) \left(x^{k-1}+O(x^{k-2})\right) 
\end{equation*}
\begin{equation*}
+ \sum_{2\le j\le k} (-1)^{j-1}  \binom{k}{j} \left( \sum_{n_1,\ldots,n_j\le x} f((n_1,\ldots,n_j))\right) \left( x^{k-j}+ O(x^{k-j-1})\right) 
\end{equation*}
\begin{equation*}
=k \left(x\log \log x + C_f x + x \sum_{j=1}^N \frac{a_j}{(\log x)^j} + O\left(\frac{x}{(\log x)^{N+1}}\right) \right) \left(x^{k-1}+O(x^{k-2})\right) 
\end{equation*}
\begin{equation*}
+ \sum_{2\le j\le k} (-1)^{j-1}  \binom{k}{j} \left(D_{f,j} x^j + O\left(R_j(x) \right)\right) \left( x^{k-j}+ O(x^{k-j-1})\right), 
\end{equation*}
where $R_j(x)=x^{j-1}$ ($j\ge 3$) and  $R_2(x)= x\log \log x$ ($j=2$). This gives the result.
\end{proof}

\subsection{Proofs of the results in Section \ref{Sect_F_1}}

\begin{proof}[Proof of Theorem {\rm \ref{Th_f_additive_1}}] 
Using Lemma \ref{Lemma_additiv_gcd} for $k=1$, and that $f(p)=p$,
\begin{equation*}
\sum_{n\le x } f(n)  = \sum_{\substack{p^{\nu} \le x\\ \nu \ge 1}} \left(f(p^{\nu}) - f(p^{\nu-1})\right) \left\lfloor \frac{x}{p^{\nu}} \right\rfloor
\end{equation*}
\begin{equation} \label{T_T}
 = \sum_{p\le x} p \left\lfloor \frac{x}{p}\right\rfloor + \sum_{\substack{p^{\nu} \le x\\ \nu \ge 2}} \left(f(p^{\nu}) - f(p^{\nu-1})\right) \left\lfloor \frac{x}{p^{\nu}} \right\rfloor =: T_1 + T_2,
\end{equation}
where $T_1:= \sum_{p\le x} p \left\lfloor \frac{x}{p}\right\rfloor = \sum_{n\le x} A(n)$ and estimate \eqref{est_Alladi_Erdos} can be applied.

If $f\in  {\cal F}_1$, then $f(p^{\nu}) - f(p^{\nu-1})\ll \nu^{\ell}p^{\nu}$ for some $\ell \in \N_0$, and we show that the sum $T_2$ is negligible by comparison:
\begin{equation*}
T_2:= \sum_{\substack{p^{\nu} \le x\\ \nu \ge 2}} \left(f(p^{\nu}) - f(p^{\nu-1})\right) \left\lfloor \frac{x}{p^{\nu}} \right\rfloor
\end{equation*}
\begin{equation*} 
\ll x \sum_{\substack{p^{\nu} \le x\\ \nu \ge 2}} \nu^{\ell} = x \sum_{2\le \nu \le \log x / \log 2} \nu^{\ell} \sum_{p\le x^{1/\nu}} 1
\end{equation*}
\begin{equation*}\
\ll x \sum_{2\le \nu \le \log x / \log 2} \nu^{\ell} \frac{x^{1/\nu}}{\log x^{1/\nu}} \ll \frac{x^{3/2}}{\log x} \sum_{\nu \le \log x / \log 2} \nu^{\ell+1} \ll x^{3/2} (\log x)^{\ell+1}.
\end{equation*}
\end{proof}

\begin{proof}[Proof of Theorem {\rm \ref{Th_f_additive_gcd_1}}] 
Similar to the proof of Theorem \ref{Th_f_additive_gcd}. By using Lemma \ref{Lemma_additiv_gcd},
\begin{equation*}
L_{f,k}(x):= \sum_{n_1,\ldots,n_k\le x} f((n_1,\ldots,n_k)) = \sum_{\substack{p^{\nu} \le x\\ \nu \ge 1}} \left(f(p^{\nu}) - f(p^{\nu-1})\right) \left\lfloor \frac{x}{p^{\nu}} \right\rfloor^k
\end{equation*}
\begin{equation*}
= x^k \sum_{\substack{p^{\nu} \le x\\ \nu \ge 1}} \frac{f(p^{\nu}) - f(p^{\nu-1})}{p^{\nu k}} + x^{k-1} V_{f,k}(x),
\end{equation*}
where
\begin{equation*}
V_{f,k}(x) \ll \sum_{\substack{p^{\nu} \le x\\ \nu \ge 1}} \frac{f(p^{\nu}) - f(p^{\nu-1})}{p^{\nu (k-1)}}
\ll \sum_{p\le x}  \sum_{\nu=1}^{\infty} \frac{\nu^{\ell}}{p^{\nu (k-2)}}.  
\end{equation*}

Case I. If $k\ge 3$, then $V_{f,k}(x) \ll \sum_{p\le x} \frac1{p^{k-2}}$, using \eqref{estimate_ell} or \eqref{est_1}, which is $\ll \log \log x$ for $k=3$ and is $\ll 1$ for $k\ge 4$. 

Also, 
\begin{equation*}
\sum_p \sum_{\nu = 1}^{\infty} \frac{|f(p^{\nu}) - f(p^{\nu-1})|}{p^{\nu k}}  \ll  
\sum_p \sum_{\nu =1}^{\infty} \frac{\nu^{\ell}}{p^{\nu (k-1)}} \ll \sum_p \frac1{p^{k-1}} < \infty,
\end{equation*}
by \eqref{est_1}, where $k\ge 3$, hence the series 
\begin{equation*}
\sum_p \sum_{\nu = 1}^{\infty} \frac{f(p^{\nu}) - f(p^{\nu-1})}{p^{\nu k}} = 
\sum_p \left(1-\frac1{p^k}\right) \sum_{\nu = 1}^{\infty} \frac{f(p^{\nu})}{p^{\nu k}} =: \overline{D}_{f,k}
\end{equation*}
is absolutely convergent.

Furthermore, like in the proof of Theorem \ref{Th_f_additive_gcd}, 
\begin{equation*}
\sum_{\substack{p^{\nu} \le x\\ \nu \ge 1}} \frac{f(p^{\nu}) - f(p^{\nu-1})}{p^{\nu k}} = 
\end{equation*}
\begin{equation*}
= \sum_p \sum_{\nu =1}^{\infty} \frac{f(p^{\nu}) - f(p^{\nu-1})}{p^{\nu k}} -  \sum_{p>x} \sum_{\nu=1}^{\infty} 
\frac{f(p^{\nu}) - f(p^{\nu-1})}{p^{\nu k}}  - \sum_{p\le x} \sum_{\nu  > \log x/\log p} 
\frac{f(p^{\nu}) - f(p^{\nu-1})}{p^{\nu k}},
\end{equation*}
where
\begin{equation*}
\sum_{p>x} \sum_{\nu =1}^{\infty} \frac{f(p^{\nu}) - f(p^{\nu-1})}{p^{\nu k}} \ll
\sum_{p>x} \sum_{\nu =1}^{\infty} \frac{\nu^{\ell}}{p^{\nu (k-1)}} \ll \sum_{p>x} \frac1{p^{k-1}} \ll \sum_{n>x} \frac1{n^{k-1}} \ll \frac1{x^{k-2}}, 
\end{equation*}

Also,
\begin{equation*}
\sum_{p\le x} \sum_{\nu  > \log x/\log p} \frac{f(p^{\nu}) - f(p^{\nu-1})}{p^{\nu k}} 
\ll \sum_{p\le x} \sum_{\nu > \log x/\log p}^{\infty} \frac{\nu^{\ell}}{p^{\nu (k-1)}} \ll \frac1{x^{k-2}(\log x)^2}, 
\end{equation*}
by \eqref{S_k}. 

Case II. If $k=2$, then 
\begin{equation*}
V_{f,2}(x) \ll \sum_{p\le x} \sum_{\nu \le \log x/ \log p} \nu^{\ell} \ll \sum_{p\le x} \left(\frac{\log x}{\log p}\right)^{\ell +1}\ll \frac{x}{\log x} 
\end{equation*}
by \eqref{est_split}. Also,
\begin{equation*}
\sum_{\substack{p^{\nu} \le x\\ \nu \ge 1}} \frac{f(p^{\nu}) - f(p^{\nu-1})}{p^{2\nu}} = \sum_{p \le x}  \frac1{p} + \sum_{\substack{p^{\nu} \le x\\ \nu \ge 2}} \frac{f(p^{\nu}) - f(p^{\nu-1})}{p^{2\nu}},
\end{equation*}
where
\begin{equation*}
\sum_{p\le x} \frac1{p} = \log \log x + M + O\left(\frac1{\log x} \right),    
\end{equation*}
and 
\begin{equation*}
\sum_p \sum_{\nu = 2}^{\infty} \frac{|f(p^{\nu}) - f(p^{\nu-1})|}{p^{2\nu}}  \ll  
\sum_p \sum_{\nu =2}^{\infty} \frac{\nu^{\ell}}{p^{\nu}} \ll \sum_p \frac1{p^2} < \infty,
\end{equation*}
by \eqref{est_1}, hence the series 
\begin{equation*}
\sum_p \sum_{\nu = 2}^{\infty} \frac{f(p^{\nu}) - f(p^{\nu-1})}{p^{2\nu}} = 
\sum_p \left(-\frac1{p}+ \left(1-\frac1{p^2}\right) \sum_{\nu = 2}^{\infty} \frac{f(p^{\nu})}{p^{2\nu }}\right) 
\end{equation*}
is absolutely convergent.

Furthermore, in a similar manner as above, 
\begin{equation*}
\sum_{\substack{p^{\nu} \le x\\ \nu \ge 2}} \frac{f(p^{\nu}) - f(p^{\nu-1})}{p^{2\nu}} 
\end{equation*}
\begin{equation*}
= \sum_p \sum_{\nu =2}^{\infty} \frac{f(p^{\nu}) - f(p^{\nu-1})}{p^{2\nu}} -  \sum_{p>x^{1/2}} \sum_{\nu=2}^{\infty} 
\frac{f(p^{\nu}) - f(p^{\nu-1})}{p^{2\nu}}  - \sum_{p\le x^{1/2}} \sum_{\nu  > \log x/\log p} 
\frac{f(p^{\nu}) - f(p^{\nu-1})}{p^{2\nu}},
\end{equation*}
where
\begin{equation*}
\sum_{p>x^{1/2}} \sum_{\nu =2}^{\infty} \frac{f(p^{\nu}) - f(p^{\nu-1})}{p^{2\nu }} \ll
\sum_{p>x^{1/2}} \sum_{\nu =2}^{\infty} \frac{\nu^{\ell}}{p^{\nu}} \ll \sum_{p>x^{1/2}} \frac1{p^2} \ll \sum_{n>x^{1/2}} \frac1{n^2} \ll \frac1{x^{1/2}}, 
\end{equation*}
and
\begin{equation*}
\sum_{p\le x^{1/2}} \sum_{\nu  > \log x/\log p} \frac{f(p^{\nu}) - f(p^{\nu-1})}{p^{2\nu}} 
\ll \sum_{p\le x^{1/2}} \sum_{\nu > \log x/\log p}^{\infty} \frac{\nu^{\ell}}{p^{\nu}} \ll \frac1{x^{1/2}(\log x)^2}, 
\end{equation*}
by \eqref{S_1}.
\end{proof}

\begin{proof}[Proof of Theorem {\rm \ref{Th_f_additive_lcm_1}}] Similar to the proof of Theorem \ref{Th_f_additive_lcm}. By Proposition \ref{Prop_key} and by symmetry we have
\begin{equation*}
L_{f,k}(x): = \sum_{n_1,\ldots,n_k\le x} f([n_1,\ldots,n_k]) = \sum_{n_1,\ldots,n_k\le x} \sum_{1\le j\le k} (-1)^{j-1} 
\sum_{1\le i_1< \ldots < i_j \le  k} f((n_{i_1},\ldots,n_{i_j}))
\end{equation*}
\begin{equation*}
= \sum_{1\le j\le k} (-1)^{j-1} \binom{k}{j} \sum_{n_1,\ldots,n_k\le x}  f((n_1,\ldots,n_j))
\end{equation*}
\begin{equation*}
= k \left(\sum_{n_1\le x} f(n_1)\right) \left(x^{k-1}+O(x^{k-2})\right) - \binom{k}{2} \left(\sum_{n_1,n_2\le x} f((n_1,n_2)) \right) \left(x^{k-2}+O(x^{k-3})\right) 
\end{equation*}
\begin{equation*}
+ \sum_{3\le j\le k} (-1)^{j-1}  \binom{k}{j} \left( \sum_{n_1,\ldots,n_j\le x} f((n_1,\ldots,n_j))\right) \left( x^{k-j}+ O(x^{k-j-1})\right).
\end{equation*}

Now using Theorems \ref{Th_f_additive_1} and \ref{Th_f_additive_gcd_1} gives the result.
\end{proof}

\subsection{Proofs of the results in Section \ref{Sect_B}}

\begin{proof}[Proof of Theorem {\rm \ref{th:B_1}}]
Since $B(p)=0$ and $B \left( p^\nu \right) - B \left( p^{\nu-1} \right) = p$ as soon as $\nu \ge 2$, Lemma \ref{Lemma_additiv_gcd} with $k=1$ yields
\begin{equation*}
  \sum_{n \le x} B(n) = \sum_{\substack{p^\nu \le x \\ \nu \ge 2}} p \left \lfloor \frac{x}{p^\nu} \right \rfloor 
  = \sum_{p \le \sqrt{x}} p \sum_{2 \le \nu 
  \le \frac{\log x}{\log p}} \left\lfloor  \frac{x}{p^\nu} \right\rfloor 
\end{equation*}
\begin{equation*}
 = x \sum_{p \le \sqrt{x}} p \sum_{2 \le \nu\le \frac{\log x}{\log p}} \frac{1}{p^\nu} - \sum_{p \le \sqrt{x}} p \sum_{2 \le \nu \le \frac{\log x}{\log p}} \left(\frac{x}{p^\nu} - \left\lfloor  
 \frac{x}{p^\nu} \right\rfloor \right) 
 \end{equation*}
\begin{equation*}
 := x \Sigma_1 - \Sigma_2,
\end{equation*}
with
\begin{equation*}
   \Sigma_1 = \sum_{p \le \sqrt{x}} p \sum_{\nu = 2}^\infty \frac{1}{p^\nu} - \sum_{p \le \sqrt{x}} p \sum_{\nu >  \frac{\log x}{\log p}} \frac{1}{p^\nu} 
  = \sum_{p \le \sqrt{x}} \frac{1}{p-1} + O \left( \frac{1}{x} \sum_{p \le \sqrt{x}} p \right) 
  \end{equation*}
\begin{equation*}
   = \sum_{p \le \sqrt{x}} \frac{1}{p} + \sum_p \frac{1}{p(p-1)} - \sum_{p > \sqrt{x}} \frac{1}{p(p-1)} + O \left( \frac{\pi(\sqrt{x})}{\sqrt x} \right) 
   \end{equation*}
\begin{equation*}
  = \log \log \sqrt{x} + M + \sum_p \frac{1}{p(p-1)} + O \left( \frac{1}{\log x} \right) 
  \end{equation*}
\begin{equation*}
  = \log \log \sqrt{x} + F + O \left( \frac{1}{\log x} \right)
\end{equation*}
and
$$
\left| \Sigma_2 \right| \le \log x \sum_{p \leq \sqrt{x}} \frac{p}{\log p} \ll  \sqrt{x} \log x \sum_{p\le \sqrt{x}} \frac1{\log p}  \ll 
\frac{x}{\log x},
$$
by \eqref{est_split}, as required.
\end{proof}

\begin{proof}[Proof of Theorem {\rm \ref{Th_B_gcd}}]
With the help of Lemma \ref{Lemma_additiv_gcd}, similar to the proofs of above, 
\begin{equation*}
   \sum_{n_1,\ldots,n_k \le x} B\left( (n_1,\dotsc,n_k) \right) 
 = \sum_{\substack{p^\nu \le x \\ \nu \ge 2}} p \left \lfloor \frac{x}{p^\nu} \right \rfloor^k 
\end{equation*}
\begin{equation*} 
  = \sum_{p \le \sqrt{x}} p \sum_{2 \le \nu \le \log x/\log p} \left( \left( \frac{x}{p^\nu} \right)^k + O \left( \left( \frac{x}{p^\nu} \right)^{k-1} \right) \right) \\
\end{equation*}
\begin{equation*}
= x^k \sum_{p \le \sqrt{x}} \ \sum_{\nu = 2}^\infty \frac{1}{p^{\nu k-1}} - x^k \sum_{p \le \sqrt{x}} \sum_{\nu > \log x/ \log p} 
   \frac{1}{p^{\nu k-1}} + O \left( x^{k-1} \sum_{p \le \sqrt{x}} \ \sum_{2 \le \nu \le \log x/ \log p} \frac{1}{p^{\nu(k-1)-1}} \right) \\
\end{equation*}
\begin{equation*}
= x^k \sum_p \frac{1}{p^{k-1}(p^k-1)} - x^k \sum_{p > \sqrt{x}} \frac{1}{p^{k-1}(p^k-1)} + O \left( \sum_{p \le \sqrt{x}} \frac{p^{k+1}}{p^k-1} \right) + O \left( x^{k-1} \sum_{p \le \sqrt{x}} \ \sum_{\nu = 2}^\infty \frac{1}{p^{\nu(k-1)-1}} \right) \\
\end{equation*}
\begin{equation*}
= x^k \sum_p \frac{1}{p^{k-1}(p^k-1)} + O \left( \frac{x}{\log x} \right)  + O \left( \sqrt{x} \, \pi(\sqrt{x}) \right) + O \left( x^{k-1} \sum_{p \le \sqrt{x}} \frac{p^{k-3}}{p^k-p} \right) \\
\end{equation*}
\begin{equation*}
= x^k \sum_p \frac{1}{p^{k-1}(p^k-1)} + O \left( \frac{x}{\log x} \right) + O \left( x^{k-1} U_k(x) \right), 
\end{equation*}
where $U_k(x)=1$ ($k\ge 3$) and $U_2(x)= \log \log x$ ($k=2$), completing the proof.
\end{proof}

\begin{proof}[Proof of Theorem {\rm \ref{Th_B_lcm}}]
Similar to the proofs of Theorems \ref{Th_f_additive_lcm} and \ref{Th_f_additive_lcm_1}, by using Proposition \ref{Prop_key} and Theorems \ref{th:B_1}, \ref{Th_B_gcd}.
We omit the details.
\end{proof}

\end{document}